\theoremstyle{theorem}
\newtheorem{thm}{Theorem}[section]
\newtheorem{lm}[thm]{Lemma}
\newtheorem{prop}[thm]{Proposition}
\newtheorem{cor}[thm]{Corollary}
\newtheorem{dfn}[thm]{Definition}
\newtheorem{remark}[thm]{Remark}
\newtheorem{example}{Example}
\def\N{\mathbb{N}}
\def\R{\mathbb{R}}
\def\C{\mathbb{C}}
\def\Real{\mathrm{Re}}
\def\Lip{{\rm Lip}_b}
\begin{document}
\title{On uniqueness of measure-valued solutions to Liouville's equation of
Hamiltonian PDEs}

\author{Zied Ammari and Quentin Liard\footnote{zied.ammari@univ-rennes1.fr, quentin.liard@univ-rennes1.fr, IRMAR, Universit\'e de Rennes I, campus de Beaulieu, 35042 Rennes
  Cedex, France.}}

\maketitle

\begin{abstract}
In this paper, the Cauchy problem of classical Hamiltonian PDEs is recast into a
Liouville's equation with measure-valued solutions. Then a uniqueness property
for the latter equation is proved under some natural assumptions. Our result
extends the method of characteristics  to Hamiltonian systems with infinite degrees
of freedom and it applies to a large variety of Hamiltonian PDEs (Hartree, Klein-Gordon, Schr\"odinger, Wave, Yukawa \dots). The main arguments in the proof are a projective point of view and a probabilistic representation of  measure-valued solutions to continuity equations in finite dimension.
\end{abstract}
{
\footnotesize {{\it Keywords}: Continuity equation, method of characteristics, measure-valued solutions, nonlinear PDEs. \\2010 Mathematics subject
  classification: 35Q82, 35A02, 35Q55, 35Q61, 37K05, 28A33}
}

\section{Introduction}
Liouville's equation  is a fundamental equation of statistical mechanics which  describes the time evolution of phase-space distribution functions.  Consider for instance a Hamiltonian system $H(p,q)=H(p_1,\cdots,p_n,q_1,\cdots,q_n)$ of finite degrees of freedom where  $(q_1,\cdots,q_n,p_1\cdots,p_n)$ are the position-momentum canonical coordinates. Then, the time evolution of a probability density function $\varrho(p,q,t)$ describing the system at time $t$ is governed by the Liouville's equation,
\begin{equation}
\label{eq.liouvilleintro}
\frac{\partial \varrho}{\partial t}+\{ \varrho, H\}=0\,,
\end{equation}
with the Poisson bracket defined as follows,
$$
\{\varrho, H\}=\sum_{i=1}^{n}\left[\frac{\partial H}{\partial p_{i}}\frac{\partial\varrho}{\partial q^{i}}-\frac{\partial H}{\partial q^{i}}\frac{\partial \varrho}{\partial p_{i}}\right]\,.
$$
By formally differentiating $\varrho(p_t,q_t,t)$ with respect to time, when $(p_t,q_t)$ are solutions of  the Hamiltonian equations, we recover the  Liouville's theorem as stated by Gibbs "The distribution function is constant along any trajectory in phase space", i.e.,
$$
\frac{d}{dt}\varrho(p_t,q_t,t)=0\,.
$$
The  method of characteristics  says indeed that if the Hamiltonian is sufficiently smooth and generates a unique Hamiltonian flow $\Phi_t$ on the phase-space, then the density function $\varrho(p,q,t)$ is uniquely determined by its initial value $\varrho(p,q,0)$ and it is given as the backward propagation along the characteristics, i.e.,
$$
\varrho(p,q,t)=\varrho(\Phi_t^{-1}(p,q),0)\,.
$$
It is known that Liouville's theorem holds true in a broader context than those of Hamiltonian systems.  Consider a differential equation,
\begin{equation}
\label{ode}
\frac{d}{dt} X = F(X),\quad X(t=0)= X_0\,,
\end{equation}
with $X =(X_1,\cdots, X_n )\in\R^n$ and $F =(F_1,\cdots,F_n ):\R^n\to\R^n$ is a given smooth vector field such that a unique flow map $\Phi_t: \R^n\mapsto \R^n$ exists and solves the ODE \eqref{ode}. If the system \eqref{ode} is at an initial statistical state described by a probability density function $\varrho(X,0)$ at $t = 0$, then under the flow map $\Phi_t$, the evolution of this state is described by a density $\varrho(X,t)$, which is the pull-back of the initial one,
\begin{equation}
\label{pull-back}
\varrho(X,t)=\varrho(\Phi_t^{-1}(X),0)\,.
\end{equation}
If the vector field $F$ satisfies the Liouville's property, which is the following divergence-free condition,
$$
\mathrm{div}(F)=\sum_{j=1}^n \frac{\partial F_j}{\partial X_j}=0\,,
$$
then the flow map $\Phi_t$ is volume preserving (i.e.~Lebesgue  measure preserving) on the phase space and for all times the density $\varrho(X,t)$ verifies the Liouville's equation,
\begin{equation}
\label{Liouville}
\frac{\partial \varrho}{\partial t}+F\cdot\nabla_X\varrho =0\,.
\end{equation}
Once again,  when the vector field $F$ is sufficiently smooth the theory of characteristics  says that  \eqref{pull-back} is the unique solution of the Liouville's equation \eqref{Liouville} with the initial value $\varrho(X,0)$. This enlightens the relationship between individual solutions of the ODE  \eqref{ode} and statistical (probability measure) solutions of the  Liouville's equation \eqref{Liouville}. Hence, one can easily believe that those results  reflect a fundamental relation  that may extend to non-smooth vector fields or to dynamical systems with infinite degrees of freedom. Actually, the non-smooth framework has been extensively studied and uniqueness of probability measure solutions of Liouville's equation is established via a general superposition principle, see e.g.~\cite{AmCr,AGS,BerP,Crithesis,Di,DiLi,Man,PouRa} and also
\cite{BahCh,CoLer}. In contrast, the extension to dynamical systems with infinite degrees of freedom is less investigated. There are indeed fewer results \cite{AmFig,KoRo,Szc} and as far as we understand those attempts do not  apply to classical PDEs. However, in \cite[Appendix C]{AmNi4} the authors established  a general uniqueness result for
measure-valued solutions to Liouville's equation of Hamiltonian PDEs and used it to derive  the mean-field limit of Bose gases.  Our aim in this article is to  improve the result in \cite[Appendix C]{AmNi4}, to give a detailed and  accessible presentation, and to provide some applications to nonlinear classical PDEs like Hartree, Klein-Gordon, Schr\"odinger, Wave,  Yukawa  equations.

Beyond the fact that  Liou
ville's equation is the natural ground for a statistical theory of Hamiltonian PDEs that  will be fruitful to develop in a general and systematic way
(see e.g. \cite{Bou,BT,CFL,LRS,MV}); there is another concrete reason  to  address the previous uniqueness property. In fact, when we study the relationship between quantum field theories and classical PDEs we encounter  such uniqueness problem, see \cite{AmFa,AmNi4,AmMa}. Roughly speaking, the quantum counterpart of Liouville's equation is the von Neumann equation describing  the time evolution of quantum states of (linear) Hamiltonian  systems. If we consider the classical limit,  $\hbar\to 0$ where $\hbar$ is an effective "Planck constant" which depends on the scaling of the system at hand, then quantum states transform in the limit  $\hbar\to 0$ into probability measures satisfying a Liouville equation related to a nonlinear Hamiltonian PDE, see \cite{AmNi1,AmNi2,AmNi3,AmNi4}. Therefore, the uniqueness property for  probability measure solutions of Liouville's equation is a crucial step towards a rigourous justification of the classical limit or the so-called Bohr's correspondence principle for quantum field theories
\cite{AmFa,AmFa2,AmNi4,Li,BQ}.

It is not so clear how to generalize the characteristics method  for Hamiltonian
systems with infinite degrees of freedom \cite{Szc}. One of the difficulties for instance is the lack of translation-invariant measures on infinite dimensional normed spaces.  Nevertheless, there is an interesting  approach \cite[Chapter 8]{AGS} related to optimal transport theory that improves the standard characteristics method by using a regularization argument and the differential structure of spaces of probability measures. In particular, this approach avoids the use of a reference measure and it is suitable for generalization to systems with infinite degrees of freedom. This was exploited in \cite[Appendix C]{AmNi4} to prove a uniqueness property for Liouville's equation considered in a weak sense,
\begin{equation}
\label{liouville}
\partial_{t}\mu_{t}+\nabla^{T}(F.\mu_{t})=0\,.
\end{equation}
 Here  $t\to \mu_t$ are probability measure-valued solutions and $F$ is a non-autonomous vector field, related to a Hamiltonian PDE, and defined on a rigged Hilbert space $\mathscr{Z}_1\subset\mathscr{Z}_0\subset\mathscr{Z}_1'$ (an example is given by Sobolev spaces $H^1(\R^d)\subset L^2(\R^d)\subset H^{-1}(\R^d)$). The precise meaning of the equation
  \eqref{liouville} will be explained in the next section.
  The aforementioned result  in
  \cite[Appendix C]{AmNi4} uses essentially the existence of a continuous Hamiltonian flow on the space    $\mathscr{Z}_1$ with the following assumption on the vector field $F$,
\begin{equation}
\label{velocityL2}
\forall T>0, \exists\, C>0, \quad \int_{-T}^{T}\big[\int_{\mathscr Z_1}||F(t,z)||^{2}_{\mathscr Z_1}\,d\mu_{t}(z)\big]^{\frac{1}{2}}dt\leq C\,.
\end{equation}
In the present article, we simplify the
proof in \cite[Appendix C]{AmNi4} by avoiding  the use of Wasserstein distances. In fact, we exclusively
relay on  the weak narrow topology, which is more flexible. More importantly,  we relax the above scalar velocity estimate  \eqref{velocityL2} so that the required assumption is now:
\begin{equation}
\label{velocityL1}
\forall T>0, \exists\, C>0, \quad \int_{-T}^{T}\int_{\mathscr Z_{1}}||F(t,z)||_{\mathscr{Z}_{1}'}\,d\mu_{t}(z)dt\leq C\,.
\end{equation}
This means that the vector field $F$ maps $\mathscr Z_1$ into $\mathscr Z_1'$ while before
$F:\mathscr Z_1\to\mathscr Z_1$. Moreover,  we have replaced the $L^2$ norm with a $L^1$ norm and respectively the $\mathscr{Z}_1$ norm by $\mathscr{Z}_1'$. In particular, the assumption
\eqref{velocityL1}  allows to consider more singular nonlinearities.
To enlighten the type of results we obtain here, we consider the following example. Let $\mathscr{Z}_{0}=L^{2}(\R)$, $\mathscr{Z}_{1}=H^{1}(\R)$ and consider the one dimensional nonlinear Schr\"odinger (NLS) equation,
\begin{equation}
\label{eq.nls}
\left\{
      \begin{aligned}
      i  \partial_{t}z_{t}&=F(z_t)\,,&\\
        z_{|t=0}&=z_{0}\,,\\
         \end{aligned}
    \right.
\end{equation}
 with $F:H^1(\R)\to H^{-1}(\R)$, $F(z)=-\Delta z+|z|z$,  an autonomous vector field defined on the energy space $H^{1}(\R)$. By working in the equivalent  interaction representation, we obtain a non-autonomous vector field $F(t,z)=e^{-it \Delta} \big(|e^{it \Delta} z|^2 e^{it\Delta}z\big)$ and a differential equation similar to \eqref{eq.nls}.
It is well-known that the initial value problem \eqref{eq.nls} is globally well-posed on $H^1(\R)$. Moreover, Sobolev-Gagliardo-Nirenberg inequality gives the existence of a constant $C>0$ such that for any $z_1,z_2\in  H^{1}(\R)$ and for any $t\in\R$,
$$
||F(t,z_1)-F(t,z_2)||_{L^2(\R)}\leq C (||z_1||^{2}_{H^{1}(\R)} +
||z_2||^{2}_{H^{1}(\R)})\,||z_1-z_2||_{L^{2}(\R)}\,.
$$
Consider now any measure-valued solution $(\mu_t)_{t\in\R}$ of the Liouville equation \eqref{liouville} with $F$ given by \eqref{eq.nls} and suppose that the following a priori estimate,
\begin{equation}
\label{ener}
\int_{H^1(\R)} ||z||^{2}_{H^{1}(\R)}\,(1+||z||_{L^{2}(\R)})\, d\mu_t\leq C\,,
\end{equation}
holds true for some time-independent constant $C>0$. Then the assumption \eqref{velocityL1} is satisfied and  our main Theorem \ref{pr.D5} says that $\mu_t$ is  the push-forward (or the image measure) of $\mu_0$ by the NLS flow map, i.e. $\mu_t=\Phi(t,0)_\sharp \mu_0$ where $\Phi(t,0)$ is the global flow of \eqref{eq.nls} (see Section \ref{se.re} and \ref{se.W1examples} for more details). Remark that the requirement \eqref{ener} says essentially that the energy  mean with respect to  $\mu_t$ is finite. Because of energy and mass conservation, in this case, the estimate \eqref{ener} holds true for all times if we assume it at time $t=0$. Notice also that in several other examples, some are provided in Section \ref{se.W1examples}, the assumption \eqref{velocityL2} can not be verified while \eqref{velocityL1} is satisfied. In particular, the improvement provided in this article allows to show   general and stronger results in the mean-field theory of quantum many-body dynamics, see \cite{Li}.

 Our proof of the uniqueness property relies on a probabilistic representation for measure-valued solutions to continuity equations in finite dimension due to S.~Maniglia \cite{Man} who extended a  previous result of L.~Ambrosio, N.~Gigli and G.~Savar\'e \cite[Chapter 8]{AGS}.
 To handle the  infinite dimensional case, we utilize a projective argument employed in \cite{AmNi4} and adapted from \cite[Chapter 8]{AGS}.
 We believe that the methods used in this paper are not widely known although they seem quite
 fundamental.  For this reason, we attempt to give a detailed  and self-contained exposition addressed for  a wider audience and we hope that this will pave the way for further development towards a general and consistent  statistical theory of Hamiltonian PDEs.

\emph{Outline}: Our main results,  Theorem
\ref{pr.D4} and \ref{pr.D5}, are stated in section \ref{se.re}. Then several examples of nonlinear PDEs are discussed in Section  \ref{se.W1examples}. The proof is detailed in Section \ref{se.proof}. For reader's convenience a short appendix collecting useful notions  in measure theory is provided (tightness, equi-integrability, Dunford-Pettis theorem, disintegration).

\section{Main Results}
\label{se.re}
Consider a rigged Hilbert space $\mathscr Z_{1}
\subset \mathscr{Z}_{0} \subset \mathscr Z_{1}'$ such that  $(\mathscr{Z}_1,\mathscr{Z}_0)$ is a pair of complex \emph{separable} Hilbert spaces, $\mathscr{Z}_1$ is densely continuously embedded in $\mathscr Z_0$ and $\mathscr Z_1'$ is the dual of $\mathscr Z_1$ with respect to the duality bracket extending the inner product $\langle \cdot, \cdot\rangle_{\mathscr Z_0}$. A significant  example is provided  by Sobolev spaces $ {\displaystyle \ H^{s}(\mathbb {R} ^{d})\subset L^{2}(\mathbb {R} ^{d}) \subset H^{-s}(\mathbb {R} ^{d})}$ with $s>0$.

\bigskip

\emph{The initial value problem}: Let $v:\R\times \mathscr Z_1\to \mathscr Z_1'$ be  a non-autonomous \emph{continuous}  vector field, i.e.~$v\in C(\R\times \mathscr Z_1, \mathscr Z_1')$, such that $v$ is bounded on bounded sets of  $\mathscr Z_1$. We shall consider the following initial value  (or Cauchy) problem on an open interval $I\subset\R$:
\begin{equation}
\label{eq.cauchy}
\left\{
    \begin{array}{l c l}
     \dot{\gamma}(t)=v(t,\gamma(t))\,, &&   \\ \\
      \gamma(s)=x \in \mathscr Z_1\,,&  s\in I\,, &
   \end{array}
    \right.
\end{equation}
We are interested in the notion of weak and strong $\mathscr Z_1$-valued solutions.
\begin{dfn}
(i) A weak solution of the above initial value problem on $I$  is a function $I\ni t\to \gamma(t)$ belonging to the space $L^\infty(I,\mathscr Z_1)\cap W^{1,\infty}(I,\mathscr Z_1')$ satisfying \eqref{eq.cauchy} for a.e. $t\in I$ and for some $s\in I$.\\
(ii) A strong solution of the above initial value  problem on $I$  is a function $I\ni t\to \gamma(t)$ belonging to the space $\gamma\in C(I,\mathscr Z_1)\cap C^{1}(I,\mathscr Z_1')$  satisfying \eqref{eq.cauchy} for all $t\in I$ and for some $s\in I$.
\end{dfn}
Here $W^{1,p}(I,\mathscr Z_1')$, for $1\leq p\leq\infty$, denote the Sobolev spaces of classes of functions in $L^p(I,\mathscr Z_1')$ with  distributional first derivatives in  $L^p(I,\mathscr Z_1')$. It is well known that the elements $\gamma$ of $W^{1,p}(I,\mathscr Z_1')$
are absolutely continuous curves in $\mathscr Z_1'$  with almost everywhere defined derivatives in $\mathscr Z_1'$ satisfying $\dot\gamma\in L^p(I,\mathscr Z_1')$. Moreover, if $I$ is a bounded open interval, the following embeddings hold true:
\begin{eqnarray*}
&&W^{1,\infty}(I,\mathscr Z_1') \subset W^{1,p}(I,\mathscr Z_1')
\subset C^{0,\alpha}(\bar I, \mathscr Z_1'), \; \text{ with } \;\alpha=\frac{p-1}{p}, \text{ for } 1<p<\infty\,,\\
&&W^{1,\infty}(I,\mathscr Z_1') \subset W^{1,1}(I,\mathscr Z_1') \subset C_{u,b}(\bar I, \mathscr Z_1')\,,
\end{eqnarray*}
where $C_{u,b}$ stands for uniformly continuous bounded functions and
$C^{0,\alpha}$ for H\"older continuous functions. In particular, if $\gamma$ is a weak solution
of \eqref{eq.cauchy} then $\gamma:\bar I\to\mathscr Z_1$ is weakly continuous,  $\gamma$
is differentiable almost everywhere on $I$ and $\dot \gamma(t)=v(t,\gamma(t))\in\mathscr Z_1'$, for a.e.~$t\in I$.  Hence, the initial value problem \eqref{eq.cauchy} makes sense in the space $L^\infty(I,\mathscr Z_1)\cap W^{1,\infty}(I,\mathscr Z_1')$.
Furthermore, it is easy to check using the assumptions on the vector field $v$ that any
function $\gamma\in L^\infty(I,\mathscr Z_1)$ satisfying the Duhamel formula,
\begin{equation}
\label{duhamel}
\gamma(t)=x+\int_s^t v(\tau,\gamma(\tau)) ds\,, \text{ for  a.e. } t\in I\,,
\end{equation}
is a weak solution of \eqref{eq.cauchy}. Conversely, any weak solution $\gamma$ of \eqref{eq.cauchy} satisfies \eqref{duhamel} since $\gamma$ is absolutely continuous
  with an almost everywhere derivative in $L^\infty(I,\mathscr Z_1')$.
  Similarly, strong solutions of \eqref{eq.cauchy} on $I$ are exactly continuous curves
  in $C(I,\mathscr Z_1)$ satisfying the Duhamel formula \eqref{duhamel} for all $t\in I$
  (see \cite{Caz1,CaHa} for more details). In the  following, we precise the meaning of local and global well posedness  of the initial value problem  \eqref{eq.cauchy} on $\mathscr{Z}_1$.
\begin{dfn}
\label{LWP}
Let $v:\R\times \mathscr Z_1\to \mathscr Z_1'$ be a continuous vector field that is bounded on bounded sets. We say that the initial value problem \eqref{eq.cauchy} is locally well posed (LWP) in $\mathscr{Z}_1$ if:
\begin{itemize}
  \item [(i)]  Weak uniqueness: Any two weak solutions of  \eqref{eq.cauchy}, defined on the same open  interval $I$ and satisfying the same initial condition,  coincide.
  \item [(ii)] Strong existence: For any $x\in \mathscr Z_1$  and $s\in\R$, there exists a non-empty open interval $I$ containing $s$  such that a strong solution of \eqref{eq.cauchy} defined on $I$ exists.
  \item [(iii)] Blowup alternative: Let $(T_{min}(x,s),T_{max}(x,s))$ be the maximal interval of existence of a strong solution of \eqref{eq.cauchy}. If $T_{f}=T_{max}(x,s)<+\infty$ (resp. $T_{i}=T_{min}(x,s)>-\infty$) then,
 $$
 \underset{t\uparrow T_{f}}{\lim} ||\gamma(t)||_{\mathscr Z_1}=+\infty\,, \quad
  \text{(resp. } \underset{t\downarrow T_{i}}{\lim} ||\gamma(t)||_{\mathscr Z_1}=+\infty)\,.
 $$
  \item [(iv)] Continuous dependence on initial data:  If $x_n\to x$ in $\mathscr Z_1$ and $J\subset (T_{min}(x,s),T_{max}(x,s))$ is a closed interval, then for $n$ large enough the strong solutions $\gamma_n$ of \eqref{eq.cauchy} provided by (ii)  with $\gamma_n(s)=x_n$ are defined on $J$ and satisfy $\gamma_n\underset{n\to\infty}{\rightarrow}\gamma$ in $C(J, \mathscr Z_1)$.
\end{itemize}
If $I=\R$ in (ii) for any $x\in\mathscr Z_1$ and any $s\in\R$, we say that the initial value  problem is globally well-posed (GWP).
\end{dfn}
 The above notion of (LWP) fits better our purpose of using energy techniques when considering
 applications to Hamiltonian PDEs. Notice that (i)-(ii) imply  the existence of a unique
 maximal strong solution of the initial value problem \eqref{eq.cauchy} defined on an interval
 $(T_{min}(x,s),T_{max}(x,s))$, containing $s$, for each initial datum $x\in \mathscr Z_1$.
 Notice also that by (iv) the maps $x\to T_{min}(x,s)$ and $x\to T_{max}(x,s)$ are respectively upper and lower semicontinuous.
Furthermore, the local flow $\Phi: \R\times \R\times \mathscr Z_1 \rightarrow \mathscr Z_1$, with domain
$\mathscr D=\{T_{min}(x,s)< t<T_{max}(x,s), s\in\R, x\in\mathscr Z_1\}$, is well defined with
$t\to\Phi(t,s)(x)$ being the unique maximal  strong solution  of \eqref{eq.cauchy}. Another consequence of (i)-(iv) is that the map $\Phi(\cdot,s):B\to C(J,\mathscr Z_1)$, $ x\to
\Phi(\cdot,s)(x)\in C(J,\mathscr Z_1)$ is continuous for a ball $B$ of $\mathscr Z_1$ and
$J$ a closed interval such that $J\subset (T_{min}(x,s),T_{max}(x,s))$ for each $x\in B$. Moreover, the following local group law holds true for any  $x\in\mathscr Z_1$,
$s\in \R$ and $t,r\in (T_{min}(x,s),(T_{max}(x,s))$,
\begin{eqnarray*}
&\Phi(s,s)(x)&=x, \\
&\Phi(t,r)\circ \Phi(r,s)(x)&=\Phi(t,s)(x) \,.
\end{eqnarray*}

\bigskip

\emph{The Liouville equation}:
  In this paragraph we give  a precise meaning of  the  Liouville's
  equation in infinite dimension.   Indeed, we formulate the equation \eqref{liouville}
   in a weak sense using a convenient space of cylindrical  test functions over  $\mathscr Z_1'$ (see e.g.~\citep[Chapter 5]{AGS}).

 Let $\mathscr Z$ be a complex \emph{separable} Hilbert space endowed with its  euclidian structure $\Real{\langle \cdot, \cdot \rangle_{\mathscr Z}}$, denoted for shortness by $\langle \cdot, \cdot \rangle_{\mathscr Z,\R}$. Consider $\mathscr Z_{\R}:=\mathscr Z$ as a real Hilbert space and let $\Pi_n(\mathscr Z_{\R})$ be the set of all projections $\pi:\mathscr Z_{\R} \to  \R^n$ defined by
\begin{equation}
\label{eq.pi}
\pi(x)=(\langle x, e_1\rangle_{\mathscr Z,\R}, \cdots, \langle x, e_n\rangle_{\mathscr Z,\R})\,,
\end{equation}
where $\{e_1,\cdots,e_n\}$ is any orthonormal family of $\mathscr Z_{\R}$.  We denote by
$\mathscr{C}_{0,cyl}^{\infty}(\mathscr Z)$ the space of  functions  $\varphi=\psi\circ \pi$ with $\pi\in \Pi_n(\mathscr Z_{\R})$ for some $n\in\N$ and $\psi\in \mathscr C_0^\infty(\R^n)$. In particular, one can check that the gradient (or the $\R$-differential) of $\varphi$ is equal
to $$\nabla\varphi=\pi^T\circ\nabla\psi\circ\pi,$$
where $\pi^T$ denotes the transpose map of $\pi$.  We equally define, for any open interval $I\subset \R$, the space $\mathscr{C}_{0,cyl}^{\infty}
(I\times\mathscr Z)$ as the set of functions $\varphi(t,x)=\psi(t,\pi(x))$ with
$\psi\in \mathscr C_0^\infty(\R^{n+1})$ and $\pi\in \Pi_n(\mathscr Z_{\R})$.

The non-compactness of closed balls in a separable (infinite dimensional) Hilbert space $\mathscr Z$ suggests the introduction of  a norm in $\mathscr Z$ that ensures relative compactness of bounded sets.  Let $(e_{n})_{n \in \N^*}$ be a Hilbert basis of  $\mathscr Z_{\R}$ and define the following norm  over $\mathscr Z$,
\begin{equation}
\label{normw}
||z||_{\mathscr Z_{w}}^2=\sum_{n \in \N^*}\frac{1}{n^{2}} |\langle
      z,e_{n}\rangle_{\mathscr Z_{\R}}|^{2}, \quad \forall\,z\in
  \mathscr Z.
\end{equation}
We simply denote by $\mathscr Z_{w}$ the space $\mathscr Z$ endowed with the above norm. Remark that the weak topology on $\mathscr Z$ and the one induced by the norm  $||\cdot||_{\mathscr Z_{w}}$ coincide on  bounded sets. Moreover, the Borel $\sigma$-algebra of $\mathscr Z$ is the same with respect to the norm, weak or $||\cdot||_{\mathscr Z_{w}}$ topology.\\
The space of Borel probability measures on a Hilbert space $\mathscr
Z$ will be denoted  by $\mathfrak{P}(\mathscr Z)$ and it is naturally endowed with a
strong or weak narrow convergence topology. Indeed, a curve $t\in I\to \mu_t\in\mathfrak{P}(\mathscr Z)$ is said strongly (resp.~weakly) narrowly continuous if the real-valued map,
$$
t\in I\to \int_{\mathscr Z} \varphi(x) d\mu_t(x)\in\R\,,
$$
is continuous for every  bounded continuous function
$\varphi\in  C_b((\mathscr Z,||\cdot||_{\mathscr{Z}}),\R)$
(resp.~$\varphi\in C_b((\mathscr Z,||\cdot||_{\mathscr Z_{w}}),\R)$).
Let $v:\R\times \mathscr Z_1\to \mathscr Z_1'$ a continuous vector field.
We consider  the following Liouville's equation defined in a bounded open interval $I\subset \R$,
 \begin{equation*}
\partial_{t}\mu_{t}+\nabla^{T}(v.\mu_{t})=0\,,
\end{equation*}
understood, in a weak sense, as the integral equation,
\begin{equation}
\label{eq.transport}
      \displaystyle\int_{I}\int_{\mathscr Z_{1}'}\partial_{t}\varphi(t,x)+\Real \langle
  v(t,x),\nabla\varphi(t,x)\rangle_{\mathscr Z_{1}'} \; d\mu_{t}(x)\,dt=0, \quad\forall
    \varphi \in \mathscr{C}_{0,cyl}^{\infty}(I \times \mathscr Z_{1}')\,.
\end{equation}
In order that the above problem makes sense we assume that $\mu_t\in\mathfrak{P}(\mathscr Z_1)$  for all $t\in I$. So the integration with respect to $\mu_t$ is taken on the set $\mathscr Z_1$ where the integrand is well defined. We also assume two more  conditions on $t\to\mu_t$, namely we require that
\begin{equation}
\label{eq.A} \int_{I}
\int_{\mathscr Z_{1}}\|v(t,x)\|_{\mathscr Z_{1}'}\,d\mu_{t}(x)\,dt < \infty\,,
\end{equation}
 and the curve $I\ni t\to \mu_t$ is weakly narrowly continuous in $\mathfrak{P}(\mathscr Z_1')$. The latter assumption is a  mild requirement slightly better than
assuming $(\mu_t)_{t\in I}$ to be a Borel family in  $\mathfrak{P}(\mathscr Z_1')$, in the sense that $t\to \mu_t(A)$ is Borel for any Borel set $A\subset\mathscr Z_1'$, see \cite[Lemma 8.1.2]{AGS}.
Consequently, the integral  with respect to time in \eqref{eq.transport} is well defined
and finite thanks to the assumption \eqref{eq.A} which  ensures the integrability.

We are now in position to announce our main results which  provide a naturel link between the solutions of the Liouville's equation \eqref{eq.transport}  and the initial value problem \eqref{eq.cauchy}. Theorem \ref{pr.D4} and \ref{pr.D5} significantly  improve  the former result in \cite{AmNi4} which previously extended  the characteristics theory to  Hamiltonian PDEs.

\begin{thm}
\label{pr.D4}
Let $v:\R\times \mathscr Z_1\to \mathscr Z_1'$ be  a (non-autonomous) continuous  vector field such that $v$ is bounded on bounded sets.  Let $t\in I\to\mu_{t}\in \mathfrak{P}(\mathscr Z_1)$ be a weakly narrowly continuous solution in $\mathfrak{P}(\mathscr Z_1')$ of the Liouville equation \eqref{eq.transport} defined on an open bounded interval $I$ with the vector field  satisfying the scalar velocity estimate:
\begin{equation}
\label{scal-velo}\tag{A}
\int_{I}
\int_{\mathscr Z_{1}}\|v(t,x)\|_{\mathscr Z_{1}'}\,d\mu_{t}(x)\,dt < \infty\,,
\end{equation}
Assume additionally that:
\begin{enumerate}
  \item [(i)] There exists a ball $B$ of $\mathscr Z_1$ such that
  $\mu_t(B)=1$ for all $t\in I$.
  \item [(ii)] The initial value problem \eqref{eq.cauchy} is (LWP) in $\mathscr{Z}_1$.
\end{enumerate}
Then for any $s\in I$, the maximal existence interval  $(T_{min}(x,s),T_{max}(x,s))\supseteq \bar I$ for $\mu_s$-almost every $x\in \mathscr Z_1$.  Moreover,
$\mu_t=\Phi(t,s)_\sharp\mu_s$ for all $t\in I$ with $\Phi(t,s)$ is the local flow of the initial value problem \eqref{eq.cauchy}. Additionally, if the curve $t\to\mu_t$ is defined on $\R$ and the above assumptions still satisfied for any arbitrary bounded open interval  $I\subset\R$, then $\mu_t=\Phi(t,s)_\sharp\mu_s$ for all $t,s\in \R$.
\end{thm}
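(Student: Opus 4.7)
The plan is to combine a projective reduction to finite dimensions with the probabilistic (superposition) representation of measure-valued solutions to finite-dimensional continuity equations due to Maniglia and Ambrosio-Gigli-Savar\'e, and then to use the local well-posedness assumption to identify the lifted probability measure on paths with the characteristic flow.

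First I would fix a Hilbert basis $(e_k)_{k\in\N^*}$ of $\mathscr Z_{1,\R}'$ and let $\pi_n\in\Pi_n(\mathscr Z_{1,\R}')$ denote the associated orthogonal projection onto $\R^n$. Setting $\mu_t^n:=(\pi_n)_\sharp\mu_t\in\mathfrak P(\R^n)$ and disintegrating $\mu_t=\int\mu_t^{n,y}\,d\mu_t^n(y)$ on the fibres of $\pi_n$, testing the Liouville equation \eqref{eq.transport} against cylindrical functions of the form $\varphi(t,x)=\psi(t,\pi_n(x))$ with $\psi\in\mathscr C_0^\infty(\R^{n+1})$ shows that $\mu_t^n$ is a weakly narrowly continuous solution in $\mathfrak P(\R^n)$ of the finite-dimensional continuity equation with averaged velocity
\[
v^n(t,y):=\int_{\mathscr Z_1}\pi_n(v(t,x))\,d\mu_t^{n,y}(x)\in\R^n,
\]
while \eqref{scal-velo} and Jensen's inequality give $v^n\in L^1(I\times\R^n,dt\otimes\mu_t^n)$. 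Maniglia's version of the superposition principle then supplies a Borel probability measure $\eta^n$ on $C(\bar I,\R^n)$, concentrated on absolutely continuous solutions $\sigma$ of $\dot\sigma(t)=v^n(t,\sigma(t))$, with $(e_t)_\sharp\eta^n=\mu_t^n$ for every $t\in\bar I$. Gluing $\eta^n$ against the conditional families $\{\mu_t^{n,y}\}$ produces a lifted Borel probability measure $\tilde\eta^n$ on the path space $C(\bar I,\mathscr Z_{1,w})$ whose time marginals coincide with $\mu_t$.

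The next step is a tightness/compactness argument. Assumption (i) ensures that $\tilde\eta^n$-a.e.\ curve takes values in the ball $B\subset\mathscr Z_1$, which is compact in $\mathscr Z_{1,w}$; the Duhamel estimate $\|\gamma(t)-\gamma(s)\|_{\mathscr Z_1'}\leq\int_s^t\|v(\tau,\gamma(\tau))\|_{\mathscr Z_1'}\,d\tau$ combined with the Dunford-Pettis equi-integrability supplied by \eqref{scal-velo} gives uniform equicontinuity of the paths in $\mathscr Z_1'$, so Ascoli-Arzel\`a yields tightness of $\{\tilde\eta^n\}_n$. Extracting a narrowly convergent subsequence $\tilde\eta^{n_k}\rightharpoonup\eta$ with time marginals still $\mu_t$, and using the continuity of $v$ on bounded sets of $\mathscr Z_1$ together with the strong convergence $\pi_n\to\Id$, I would pass to the limit in the finite-dimensional Duhamel formula to conclude that $\eta$-a.e.\ curve $\gamma$ satisfies $\gamma(t)=\gamma(s)+\int_s^t v(\tau,\gamma(\tau))\,d\tau$ in $\mathscr Z_1'$, so $\gamma\in L^\infty(I,\mathscr Z_1)\cap W^{1,\infty}(I,\mathscr Z_1')$ is a weak $\mathscr Z_1$-valued solution of \eqref{eq.cauchy} with initial datum $\gamma(s)\in B$. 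Since $\gamma(t)\in B$ for all $t\in\bar I$, the blowup alternative (iii) of (LWP) forbids finite-time blow-up, so the maximal existence interval contains $\bar I$; the weak uniqueness part (i) of (LWP) then forces $\gamma(\cdot)=\Phi(\cdot,s)(\gamma(s))$. Integrating this pointwise identity against $\eta$ gives $\mu_t=(e_t)_\sharp\eta=\Phi(t,s)_\sharp(e_s)_\sharp\eta=\Phi(t,s)_\sharp\mu_s$ for every $t\in\bar I$, and a covering argument combined with the local group law of $\Phi$ extends the identity to all of $\R$.

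The main obstacle I expect is precisely the identification step in the $n\to\infty$ limit: the averaged velocity $v^n(t,\pi_n(\gamma(t)))$ is \emph{not} equal to $\pi_n(v(t,\gamma(t)))$ because of the conditional averaging, and one must show that this discrepancy vanishes narrowly against $\tilde\eta^n$. Achieving this under the mere $L^1$ bound \eqref{scal-velo}, rather than under the stronger $L^2$ bound used in \cite{AmNi4}, forces one to replace Wasserstein-based arguments by Dunford-Pettis equi-integrability (recalled in the appendix) and to work exclusively with the weak narrow topology on the path space---exactly the methodological simplification advertised in the introduction.
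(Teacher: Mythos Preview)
Your overall architecture matches the paper's: projective reduction, Maniglia's superposition in $\R^n$, tightness via Dunford--Pettis, passage to a limit measure $\eta$ on paths with $(e_t)_\sharp\eta=\mu_t$, and then the (LWP) identification. Two points, however, need to be repaired.

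First, your ``gluing $\eta^n$ against the conditional families $\{\mu_t^{n,y}\}$'' to obtain a measure $\tilde\eta^n$ on $C(\bar I,\mathscr Z_{1,w})$ \emph{whose time marginals are already $\mu_t$} does not work: the conditional measures $\mu_t^{n,y}$ depend on $t$, so there is no canonical way to manufacture from them continuous $\mathscr Z_1'$-valued paths over the $\R^n$-valued path $\sigma$. The paper lifts much more simply, by pushing $\eta^n$ forward under $\pi^{n,T}\times\pi^{n,T}$; the resulting $\hat\eta^n$ lives on finite-dimensional paths inside $\mathscr Z_1'$ and has time marginals $\hat\mu_t^n=(\hat\pi^n)_\sharp\mu_t$, \emph{not} $\mu_t$. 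Only after proving tightness of $\{\hat\eta^n\}$ and passing to a narrow limit $\eta$ does one recover $(e_t)_\sharp\eta=\mu_t$. In particular, the paths supporting $\hat\eta^n$ do not lie in the ball $B\subset\mathscr Z_1$, so your use of assumption (i) in the tightness step is misplaced: tightness follows from the scalar velocity estimate \eqref{scal-velo} alone, via the superlinear $\theta$ furnished by Dunford--Pettis and an Arzel\`a--Ascoli argument in the weak norm $\|\cdot\|_{\mathscr Z_{1,w}'}$.

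Second, once you have the limit $\eta$, the statement ``$\gamma(t)\in B$ for all $t\in\bar I$'' does not follow directly from $(e_t)_\sharp\eta=\mu_t$ and $\mu_t(B)=1$, because the $\eta$-null set could depend on $t$. The paper handles this with an $L^p$--Fatou argument: from $\int_I\int\|\gamma(t)\|_{\mathscr Z_1}^p\,d\eta\,dt\le R^p|I|$ one lets $p\to\infty$ to get $\|\gamma\|_{L^\infty(I,\mathscr Z_1)}<\infty$ for $\eta$-a.e.\ $(x,\gamma)$. This is where assumption (i) actually enters, and it is what upgrades the $W^{1,1}(I,\mathscr Z_1')$ curves provided by the probabilistic representation to genuine weak solutions in $L^\infty(I,\mathscr Z_1)\cap W^{1,\infty}(I,\mathscr Z_1')$, so that the (LWP) blowup alternative and weak uniqueness can be invoked exactly as you describe.
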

The  assumption (i) in Theorem \ref{pr.D4} requires a concentration of the measure $\mu_t$
on a bounded set of $\mathscr Z_1$ for all times in the interval $I$. This is a rather implicit condition which  may  not be so practical for the applications that we have in mind \cite{AmFa,AmFa2,AmMa,Li}. In particular, we are interested in extending the previous  result
to measures $\mu_t$ that are not concentrated in a ball of $\mathscr Z_1$  but rather having
a second finite moment in $\mathscr Z_1$.  Of course, in order to do so we require a stronger
assumption in the vector field.

\begin{thm}
\label{pr.D5}
Let $v:\R\times \mathscr Z_1\to \mathscr Z_0$ be  a (non-autonomous) continuous  vector field such that for any $M>0$ and any bounded interval $J\subset \R$, there exists  $C(M,J)>0$ satisfying:
\begin{equation}
\label{hyp.v}
||v(t,x)-v(t,y)||_{\mathscr{Z}_0} \leq C(M,J) \,(||x||^2_{\mathscr{Z}_1}+
||y||^2_{\mathscr{Z}_1}) \,||x-y||_{\mathscr{Z}_0}\,,
\end{equation}
for all $t\in J$ and $x,y\in\mathscr{Z}_1$ such that $||x||_{\mathscr{Z}_0},||y||_{\mathscr{Z}_0}\leq M$.
Let $t\in I\to\mu_{t}\in \mathfrak{P}(\mathscr Z_1)$ be a weakly narrowly continuous solution in $\mathfrak{P}(\mathscr Z_1')$ of the Liouville equation \eqref{eq.transport} defined on an open bounded interval $I$.
Assume additionally that:
\begin{enumerate}
  \item [(i)] There exists  $C>0$ such that $ \displaystyle\int_I \int_{\mathscr Z_1} ||x||^2_{\mathscr{Z}_1} d\mu_t(x)dt\leq C$.
  \item [(ii)] There exists an open Ball $B$ of $\mathscr Z_0$ such that $\mu_t(B)=1$ for all $t\in I$.
  \item [(iii)] For $s\in I$ and any $x\in \mathscr Z_1\cap B$  there exists a strong solution  of  \eqref{eq.cauchy} defined on $\bar I$ with  Definition \ref{LWP}-(iv) satisfied.
\end{enumerate}
Then $\mu_t=\Phi(t,s)_\sharp\mu_s$ for all $t\in I$ with $\Phi(t,s)$ is the local flow of the initial value problem \eqref{eq.cauchy}. Additionally, if the curve $t\to\mu_t$ is defined on $\R$ and the above assumptions still satisfied for any arbitrary bounded open interval  $I\subset\R$, then $\mu_t=\Phi(t,s)_\sharp\mu_s$ for all $t,s\in \R$.
\end{thm}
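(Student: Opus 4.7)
The plan is to represent the measure-valued solution $\mu_t$ as the evaluation at time $t$ of a probability measure concentrated on integral curves of $v$, then to collapse this representation onto the single curve $\Phi(\cdot,s)$ by characteristic uniqueness. The strategy mirrors that of Theorem \ref{pr.D4}, but the concentration hypothesis (ii) now lives in $\mathscr Z_0$ rather than $\mathscr Z_1$, so both the tightness of the lift and the final Gronwall step must take place at the $\mathscr Z_0$-level, using the time-integrated $\mathscr Z_1$-second moment from (i) in place of a pointwise $\mathscr Z_1$-bound.

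First I would reduce to the framework of Theorem \ref{pr.D4} by checking the scalar velocity estimate \eqref{scal-velo}. Applying \eqref{hyp.v} with $y=0$ on the $\mathscr Z_0$-ball $B$ from (ii), together with the continuity of $t\mapsto v(t,0)$ on the bounded interval $\bar I$, yields
\[
\|v(t,x)\|_{\mathscr Z_0}\;\leq\; C_1+C_2\,\|x\|_{\mathscr Z_1}^{2} \qquad \text{on } I\times(B\cap\mathscr Z_1).
\]
Combined with the continuous embedding $\mathscr Z_0\hookrightarrow \mathscr Z_1'$ and assumption (i), this gives $\int_I\!\int\|v\|_{\mathscr Z_1'}\,d\mu_t\,dt<\infty$, so the Liouville formulation is meaningful. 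It also provides the pointwise control of $\|v\|_{\mathscr Z_0}$ needed later for the Gronwall step.

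The heart of the proof is the construction of a probabilistic representation $\eta\in\mathfrak P\bigl(C(\bar I,\mathscr Z_{1,w})\bigr)$ of $(\mu_t)_{t\in I}$, following the projective scheme of \cite[Ch.~8]{AGS} already used in Theorem \ref{pr.D4}. For each cylindrical projection $\pi\in\Pi_n(\mathscr Z_{1,\R})$, I would disintegrate $\mu_t$ above $\pi_\sharp\mu_t$, form the averaged finite-dimensional vector field $v^\pi(t,y):=\int_{\pi^{-1}(y)}\pi\bigl(v(t,\cdot)\bigr)d\mu_{t,y}$, verify that $(\pi_\sharp\mu_t)_t$ solves the finite-dimensional continuity equation with drift $v^\pi$, and invoke Maniglia's theorem to obtain a path-space representation $\eta^\pi$ on $C(\bar I,\R^n)$ concentrated on integral curves of $v^\pi$. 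The family $\{\eta^\pi\}_\pi$ is then glued into a single measure $\eta$ by a projective-limit/Prokhorov argument. The tightness needed here is supplied by combining (i) and (ii): $\eta$-almost every trajectory remains in a $\mathscr Z_0$-ball and has $L^1_t$-integrable $\mathscr Z_1^{2}$-norm, and the intersection of a $\mathscr Z_0$-ball with a $\mathscr Z_1$-sub-level set is relatively compact in $\mathscr Z_{1,w}$. A standard extraction then shows that $\eta$-a.e.\ path is a weak solution of \eqref{eq.cauchy} in $L^\infty(I,\mathscr Z_1)\cap W^{1,\infty}(I,\mathscr Z_0)$, with initial datum in $\mathscr Z_1\cap B$.

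To finish, I would compare two such weak solutions $\gamma_1,\gamma_2$ starting from the same $x\in\mathscr Z_1\cap B$ by a Gronwall estimate at the $\mathscr Z_0$-level: since $\gamma_1-\gamma_2\in W^{1,\infty}(I,\mathscr Z_0)$, the estimate \eqref{hyp.v} with $M$ chosen larger than the common $\mathscr Z_0$-bound yields
\[
\frac{d}{dt}\,\|\gamma_1-\gamma_2\|_{\mathscr Z_0}^{2}\;\leq\;2C(M,I)\bigl(\|\gamma_1\|_{\mathscr Z_1}^{2}+\|\gamma_2\|_{\mathscr Z_1}^{2}\bigr)\,\|\gamma_1-\gamma_2\|_{\mathscr Z_0}^{2},
\]
and because $\|\gamma_i\|_{\mathscr Z_1}^{2}\in L^1(I)$ Gronwall gives $\gamma_1=\gamma_2$. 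Combined with the strong existence in (iii), this forces $\gamma=\Phi(\cdot,s)(\gamma(s))$ for $\eta$-a.e.\ path; pushing forward by the evaluation $e_t$ then delivers $\mu_t=\Phi(t,s)_\sharp\mu_s$, and the global statement follows from the local group law. The step I expect to be the main obstacle is the projective lift: building $\eta$ when the only $\mathscr Z_1$-control is the integrated second moment (i), rather than a uniform concentration as in Theorem \ref{pr.D4}, requires a time-averaged tightness criterion for the $\eta^\pi$ and a careful identification of the limit trajectories as genuine $\mathscr Z_1$-valued weak solutions of the Cauchy problem, rather than as mere curves in the larger ambient space $\mathscr Z_1'$.
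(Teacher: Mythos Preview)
Your overall strategy matches the paper's: invoke the probabilistic representation (Proposition~\ref{prob-rep}) to obtain a path measure $\eta$, use (i) and (ii) to upgrade the regularity of $\eta$-a.e.\ paths, prove uniqueness of such curves via \eqref{hyp.v}, and conclude by pushing forward along the flow. Two points deserve correction.

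First, your claimed path regularity $L^\infty(I,\mathscr Z_1)\cap W^{1,\infty}(I,\mathscr Z_0)$ is too strong. From (i) and $(e_t)_\sharp\eta=\mu_t$ one obtains only
\[
\int_{\mathfrak X}\int_I \|\gamma(t)\|_{\mathscr Z_1}^2\,dt\,d\eta \;=\; \int_I\int_{\mathscr Z_1}\|x\|_{\mathscr Z_1}^2\,d\mu_t\,dt \;\le\; C,
\]
so $\eta$-a.e.\ $\gamma\in L^2(I,\mathscr Z_1)$, not $L^\infty$. Together with the bound $\|v(t,\gamma(t))\|_{\mathscr Z_0}\le C_1+C_2\|\gamma(t)\|_{\mathscr Z_1}^2$ this gives $\gamma\in W^{1,1}(I,\mathscr Z_0)$, not $W^{1,\infty}$. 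You implicitly use the correct regularity in your Gronwall step (noting $\|\gamma_i\|_{\mathscr Z_1}^2\in L^1(I)$), so the argument survives once the class is corrected to $L^2(I,\mathscr Z_1)\cap W^{1,1}(I,\mathscr Z_0)$; this is exactly the class the paper works with, and its uniqueness argument is an integral version of your Gronwall (a finite-cover iteration of the Duhamel inequality).

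Second, the obstacle you anticipate---building $\eta$ directly on $C(\bar I,\mathscr Z_{1,w})$ with only a time-averaged $\mathscr Z_1$-moment---is a difficulty you create by aiming at too small a path space. The paper avoids it: Proposition~\ref{prob-rep} constructs $\eta$ on $\mathfrak X=\mathscr Z_1'\times\Gamma_I(\mathscr Z_1')$ using only the scalar velocity estimate \eqref{scal-velo} (which you have verified), and the tightness argument there needs no $\mathscr Z_1$-information whatsoever. The $\mathscr Z_1$-regularity of $\eta$-a.e.\ paths is then recovered \emph{a posteriori} from (i) via the displayed identity above. So the projective lift is no harder here than in Theorem~\ref{pr.D4}; what differs is only the identification of paths with $\Phi(\cdot,s)$, and there your Gronwall argument does the job.
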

\begin{remark} Here some useful comments on the above theorems.
\begin{enumerate}
  \item Both Thm. \ref{pr.D4} and \ref{pr.D5} rely on a  probabilistic representation result given in Proposition \ref{prob-rep} with some concentration arguments.
  \item Observe that $\{x\in\mathscr Z_1: (T_{min}(x,s),T_{max}(x,s))\supseteq \bar I\}$ in Thm.~\ref{pr.D4} is an open  subset of $\mathscr Z_1$ thanks to the  semi-continuity of the maps $x\to T_{min}(x,s),T_{max}(x,s)$.
  \item The existence in Thm.~\ref{pr.D4} of a non-trivial solution on $I$ of the Liouville's equation \eqref{eq.transport} implies the existence of a non-trivial strong  solution  of the initial value problem \eqref{eq.cauchy} defined  on $I$.
  \item  The condition \eqref{hyp.v} implies uniqueness of weak solutions  of the initial value problem \eqref{eq.cauchy}.
  \item Thm.~\ref{pr.D5} can also be used with $B=\mathscr Z_0$. Of course, in this case (ii) is trivial but one have to check in addition the scalar velocity estimate:
\begin{equation}
\label{scal-velo-1}\tag{A'}
\int_{I}
\int_{\mathscr Z_{1}}\|v(t,x)\|_{\mathscr Z_{0}}\,d\mu_{t}(x)\,dt < \infty\,,
\end{equation}
which is automatically satisfied if $B\varsubsetneq\mathscr Z_0$ thanks to the estimate \eqref{hyp.v} and (i).
  \item The set  $E=B\cap\mathscr Z_1$ in Thm.~\ref{pr.D5} is   $\Phi(t,s)$-invariant  modulo $\mu_s$ for any $t\in I$, i.e. $\mu_s(E\vartriangle \Phi(t,s)^{-1}(E))=0$.
  \item Thm.~\ref{pr.D5} is oriented towards some specific applications related to the author's interest. However, the proof is rather flexible and interested reader may work out a different form.
\end{enumerate}
\end{remark}

\section{Application to Hamiltonian PDEs}
\label{se.W1examples}
Consider a  Hamiltonian PDE with a real-valued energy functional having the general form,
\begin{equation}
\label{hamilton}
h(z,\bar z)=\langle z, A z\rangle_{\mathscr Z_0}+h_I(z,\bar z)\,,
\end{equation}
where $\mathscr Z_0$ is a complex separable Hilbert space, $A$ is a non-negative self-adjoint
 operator, $h_I(z,\bar z)$ is a nonlinear functional and $(z,\bar z)$ are the complex classical fields of the Hamiltonian theory. One has a natural rigged Hilbert space $\mathscr Z_1
 \subset \mathscr Z_0\subset\mathscr Z_1'$ with the energy space
 $\mathscr Z_1=Q(A)$, which is the form domain of $A$ equipped with the graph norm,
\begin{equation}
\label{graph}
||z||_{\mathscr Z_1}^2= \langle z,(A+1)\, z\rangle_{\mathscr Z_0}\,,
\end{equation}
and $\mathscr Z_1'$ is the dual of  $\mathscr Z_1$ with respect to the inner product of
$\mathscr Z_0$.  It is not necessary, but one can assume that $\mathscr Z_0$ is endowed with an (anti-linear) conjugation $c:z\to\bar z$, such that $\langle u, \bar v\rangle=\langle v, \bar u\rangle$,
keeping  invariant  $\mathscr Z_1$ and commuting with $A$ (see \cite[Appendix C]{AmNi4}).
A detailed discussion on the derivation of Liouville's equation \eqref{eq.transport} and its relationship with the Poisson structure of Hamiltonian systems is given in \cite{AmNi4}.\\
Assume that the energy \eqref{hamilton} is well-defined on $\mathscr Z_1$ and that $h$ admits  directional derivatives,
$$
\partial_{\bar z}h(x,\bar x)[u]:=\frac{d}{d\bar\lambda} h(x+\lambda u,\overline{x+\lambda u})_{\mid
\lambda=0}\,,
$$
such that the map $x\in\mathscr Z_1\rightarrow \partial_{\bar z}h(x,\bar x)\in
\mathscr Z_1'$ is continuous and bounded on bounded sets. The Hamiltonian equation
(or equation of motion) reads,
\begin{equation}
\label{eqmotion}
 i\partial_t u=\partial_{\bar z}h(u,\bar u)\,.
\end{equation}
  So, this  Hamiltonian system enters naturally into the  framework of Theorem \ref{pr.D4} with a time-independent vector field $v_1(t,x)= -i \partial_{\bar z}h(x,\bar x)$ defined as a  continuous map $v_1:\R\times\mathscr Z_1\to \mathscr Z_1'$ bounded on bounded sets.
Thus, Theorem \ref{pr.D4} can be applied to the Hamiltonian equation \eqref{eqmotion} if either (LWP) or (GWP) holds true in the energy space $\mathscr Z_1$.
Remark that no conservation law is directly used to establish  the propagation along characteristics.

\medskip
In order to apply Theorem \ref{pr.D5}, one needs to work in the interaction representation since the vector field $v_1$ may take  its values outside  $\mathscr{Z}_0$. Indeed, by differentiating $\tilde u:=e^{it A}u$ with respect to time, where $u$ is a (strong or weak) solution of the Hamiltonian equation  \eqref{eqmotion}, one obtains
\begin{equation}
\label{hamtilde}
i\partial_t \tilde u=e^{it A} \partial_{\bar z}h_{I}(e^{-it A}\tilde u,\overline{e^{-it A}\tilde u})\,.
\end{equation}
The initial value problems  \eqref{eqmotion} and \eqref{hamtilde} are equivalent, in the sense that $u$ is a strong or weak solution of \eqref{eqmotion} if and only if $\tilde u:=e^{it A}u$ is a strong or weak solution of \eqref{hamtilde} respectively.
Hence, if the non-autonomous vector field,
$$
v_2(t,z):=-ie^{it A} \partial_{\bar z}h_{I}(e^{-it A}z,\overline{e^{-it A}z})\,,
$$
is well-defined as a continuous map $v_2: \R\times \mathscr{Z}_1\to \mathscr{Z}_0$ and satisfies   the inequality \eqref{hyp.v}, then one can apply Theorem \ref{pr.D5} to the Hamiltonian system \eqref{hamilton} whenever  a (LWP) or (GWP) result is known for the initial value problem \eqref{eqmotion} in the energy space  $\mathscr{Z}_1$.

\bigskip

It is clear from the above discussion that Theorem \ref{pr.D4} and \ref{pr.D5} apply to various PDEs. We illustrate this  with  few examples that are related to the authors interest.
\begin{example}[Nonlinear Schr\"odinger equation]
Consider  the NLS equation in dimension $d$ with energy functional,
\begin{equation}
\label{NLSfunc}
h(z,\bar z)=\langle z, -\Delta_{x}\,z\rangle_{L^2(\R^{d})}+ \frac{2\lambda}{2+\alpha}
\int_{\R^d} |z(x)|^{\alpha+2} \, dx\,,
\end{equation}
 such that $2\leq\alpha<\frac{4}{d-2}$ ($2\leq \alpha<\infty$ if $d=1,2$) and $\lambda\in\C$. According to \cite[Theorem 4.4.1]{Caz1} the related initial value problem is (LWP) in $H^1(\R^d)$. Hence, Theorem \ref{pr.D4} applies to this case. The derivation of such equation from quantum many-body dynamics, for $\alpha=2$,  is proved for instance in \cite{ABGT,AmBr,ESY1}.
\end{example}

\begin{example}[Non-relativistic Hartree equation]
\label{examp2}
The energy functional of the Hartree equation is
\begin{equation}
\label{manybody}
h(z,\bar z)=\langle z, -\Delta_{x}+V(x) \,z\rangle_{L^2(\R^{d})}+
\iint_{\R^d\times\R^d}  |z(x)|\, |z(y)|^2 \,W(x-y)\,dxdy\,,
\end{equation}
where $W:\,\R^{d}\to \R$ is an even measurable function  and $V$ is a real-valued potential both satisfying the following assumptions for some $p$ and $q$,
\begin{equation*}
\begin{aligned}
&V \in L^{p}(\R^{d})+L^{\infty}(\R^{d}), \,p\geq 1,\;\,p>\frac{d}{2},\\
&W \in L^{q}(\R^{d})+L^{\infty}(\R^{d}),  \,q\geq 1,\;\,q\geq\frac{d}{2}\;
(\text{and } q>1 \text{ if } d=2)\,.
\end{aligned}
\end{equation*}
The vector field  $v(t,z):=W*|z|^{2}z:\,Q(A) \to L^2(\R^d)$ verifies, by H\"older, Young and Sobolev-Gagliardo-Nirenberg's inequalities, the estimate \eqref{hyp.v}.
The global well-posedness on $Q(A)$, conservation of energy and charge of the Hartree equation
\begin{equation*}
\left\{
      \begin{aligned}
      i  \partial_{t}z&=-\Delta z+Vz+W*|z|^{2}z&\\
        z_{t=0}&=z_{0},
         \end{aligned}
    \right.
\end{equation*}
are proved in \cite[Corollary 4.3.3 and 6.1.2]{Caz1}. Therefore, Theorem \ref{pr.D5} applies here. Remark that the assumption on $W$ are satisfied by the  Coulomb type potentials $\frac{\lambda}{|x|^{\alpha}}$ when $\alpha<2$, $\lambda \in \R$ and $d=3$. The derivation of such equation from quantum many-body dynamics is  extensively investigated, see for instance \cite{AmNi4,BGM,ESY1,FGS,Give,Hep,KnPi,Spo}.
\end{example}

\begin{example}[Klein-Gordon equation]
One of the  prominent examples of relativistic quantum field theory  is the $\varphi^4$  field theory, see e.g.~\cite{MR0272306,MR0489552}. Consider the classical Klein-Gordon  energy functional
\begin{equation}
\label{kG1}
\mathscr{H}(\varphi,\pi)=\frac{1}{2} \langle \varphi,-\Delta+m^2\;\varphi\rangle_{L^2(\R^3)}^2+\frac{1}{2}||\pi(x)||_{L^2(\R^3)}^2+\frac{1}{4}
 \int_{\R^3}\varphi^4(x) dx\,
\end{equation}
where $\varphi,\pi$ are  real fields and $m>0$ a given parameter. Writing the above  Hamiltonian  system with complex fields $z(\cdot),\bar z(\cdot)$, such that:
\begin{equation*}
\begin{aligned}
\varphi(x)&=&\frac{1}{(2\pi)^{3/2}} \int_{\R^3} (\overline{z}(k) e^{-ikx}+z(k)  e^{-ikx}) \frac{dk}{\sqrt{2\omega(k)}}\,, \\
\pi(x)&=&\frac{i}{(2\pi)^{3/2}} \int_{\R^3} (\overline{z}(k) e^{-ikx}-z(k)  e^{-ikx}) \sqrt{\frac{\omega(k)}{2}}\,dk\,,
\end{aligned}
\qquad \text{ and } \qquad\omega(k)=\sqrt{k^2+m^2}\,,
\end{equation*}
we obtain the equivalent PDE,
\begin{equation}
\label{kG2}
i \partial_t z= \omega(k)  z+ \frac{1}{\sqrt{2\omega(k)}} \mathscr{F}(\varphi^3)\,,
\end{equation}
where  $\mathscr{F}$ denotes the Fourier transform and $\varphi$ depends on $z,\bar z$ as above.
The energy space of the  equation \eqref{kG2} is the form domain $\mathscr{Z}_1=Q(\omega)$ of $\omega$ considered as an unbounded multiplication operator on $\mathscr Z_0=L^2(\R^3)$.
According to \cite[Proposition 3.2]{GeVi3}, we  have (GWP) of the Klein-Gordon
equation \eqref{kG2} in the space $\mathscr{Z}_1$ and hence Theorem \ref{pr.D4} is applicable in this case. Note that the derivation of a Klein-Gordon equation with nonlocal nonlinearity  from the $P(\varphi)_2$ quantum field theory  is established for instance in \cite{AmMa,Don,Hep}.
\end{example}

\begin{example}[Schr\"odinger-Klein-Gordon system]
The Schr\"odinger-Klein-Gordon system with Yukawa interaction  is defined by:
\begin{equation}
  \label{eq:70}\tag{S-KG}
  \left\{
    \begin{aligned}
      &i\partial_t u=-\frac{\Delta }{2M}u+ \varphi u\,,\\
      &(\Box+m^2)\varphi=- \lvert u \rvert^2\,,
    \end{aligned}
  \right .\;
\end{equation}
where $(u,\varphi)$ are the  unknowns and $M,m>0$ are given parameters.  If we
introduce the complex fields $\alpha,\bar\alpha$, defined according to the formula,
\begin{equation}
  \label{eq:71}
  \varphi(x)=\frac{1}{(2\pi )^{\frac{3}{2}}}\int_{{\R}^3}^{}\frac{1}{\sqrt{2\omega (k)}}\bigl(\bar{\alpha}(k)e^{-ik\cdot x}+\alpha(k)e^{ik\cdot x}  \bigr)  dk\; ,\quad \omega(k)=\sqrt{k^2+m^2}\,,
\end{equation}
we can rewrite \eqref{eq:70} as the equivalent system:
\begin{equation}
  \label{eq:72}\tag{S-KG$_{\alpha }$}
  \left\{
    \begin{aligned}
      &i\partial_t u=-\frac{\Delta }{2M}u+ \varphi u\\
      &i\partial _t\alpha =\omega \alpha +\frac{1}{\sqrt{2\omega }}\mathscr{F}\bigl(\lvert u  \rvert_{}^2\bigr)
    \end{aligned}
  \right .\; ,
\end{equation}
where $\mathscr{F}$ denotes the Fourier transform.  It is known that the Cauchy problem  for the Schr\"odinger-Klein-Gordon system \eqref{eq:70}  is globally well posed on the energy space  $
\mathscr{Z}_1=H^1(\R^3)\oplus Q(\omega)$  where  $Q(\omega)$ is the form domain of the operator $\omega$ equipped with the graph norm \eqref{graph}, see for instance   \cite{CollHT,Pecher} and references therein. Moreover, the vector field $v:  H^1(\R^3)\oplus Q(\omega)\to L^2(\R^3)\oplus L^2(\R^3)$ satisfies by Sobolev-Gagliardo-Nirenberg's  inequality the estimate \eqref{hyp.v}.
Hence, Theorem \ref{pr.D5} is applicable. Remark that the derivation of such equation from a quantum field theory, describing a nucleon-meson field theory, is studied in
\cite{AmFa,AmFa2}.
\end{example}

\section{Proof of the main results}
\label{se.proof}
For a normed vector space $E$ and an open bounded interval $I$, we denote by $\,\Gamma_{I}(E)$ the space of all  continuous curves from $\bar I$ into $(E, ||\cdot ||_{E})$ endowed with the sup norm,
$$
||\gamma||_{\Gamma_{I}(E)}=\sup_{t \in \bar I}||\gamma(t)||_{E}\,.
$$
We will use these notations in two cases $E=\R^d$ and $E=\mathscr Z_{1}'$.   In particular, the metric space
\begin{equation}
\label{eq.X}
\mathfrak X=\big(\mathscr Z_{1}'\times \Gamma_{I}(\mathscr Z_{1}'), ||\cdot||_{\mathscr Z_{1,w}'}+||\cdot||_{\Gamma_{I}(\mathscr Z_{1,w}')}\big)\,
\end{equation}
will play an important role. To be precise here  $\Gamma_{I}(\mathscr Z_{1}')$ is the space of continuous functions with respect to the norm $||\cdot||_{\mathscr Z_1'}$ while $\mathfrak X$ is endowed with the product weak norm related to  \eqref{normw}. Notice also that we will follow the setting of  Section \ref{se.re} without further specification.  For each $t \in I$, we define the continuous evaluation map,
$$
e_{t}:(x,\gamma) \in  E \times \Gamma_{I}(E) \mapsto
\gamma(t) \in E\,.
$$
As a first step, we  prove a probabilistic representation similar to the one proved  in finite dimension by S.~Maniglia  in \cite[Theorem 4.1]{Man}. The proof is inspired by
\cite[Theorem 8.2.1 and 8.3.2]{AGS} and the extension to infinite dimension in \cite[Proposition C.2]{AmNi4}.

\begin{prop}
\label{prob-rep}
 Let $v:\R\times \mathscr Z_1\to \mathscr Z_1'$ be  a (non-autonomous) Borel  vector field such that $v$ is bounded on bounded sets. Let $t\in I\to\mu_{t}\in \mathfrak{P}(\mathscr Z_1)$ be a weakly narrowly continuous solution in $\mathfrak{P}(\mathscr Z_1')$ of the Liouville equation \eqref{eq.transport} defined on an open bounded interval $I$ with a vector field  satisfying the scalar velocity estimate \eqref{scal-velo}.
Then there exists a Borel probability measure $\eta$, on the space $\mathfrak X$ given in \eqref{eq.X}, satisfying:
\begin{enumerate}[label=\textnormal{(\roman*)}]
  \item  \label{concen-item1}$\eta$ is concentrated on the set of $(x,\gamma)\in \mathscr Z_{1}\times \Gamma_{I}(\mathscr Z_1')$ such that the curves
    $\gamma\in   W^{1,1}(I, \mathscr Z_1')$  are solutions of the initial value problem $\dot\gamma(t)= v(t,\gamma(t))$  for a.e. $t\in I$ and $\gamma(t)\in \mathscr Z_1$ for a.e. $t\in I$ with $\gamma(s)=x\in \mathscr Z_1$ for some fixed  $s\in I$.
  \item \label{concen-item2} $\mu_t=(e_{t})_{\sharp}\eta$ for any $t\in I$.
\end{enumerate}
\end{prop}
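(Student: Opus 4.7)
The plan is to combine a projective reduction to finite dimension with the finite-dimensional probabilistic representation of Maniglia \cite[Theorem 4.1]{Man}, and then extract a narrow limit on the path space $\mathfrak{X}$. The main difficulty will be that the vector field $v$ is only defined on $\mathscr{Z}_1$ while the natural compactness of the curves comes from the weak topology of $\mathscr{Z}_1'$; showing that the limit curves are actually $\mathscr{Z}_1$-valued and solve the ODE $\dot\gamma=v(t,\gamma)$ is the delicate point.

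I would fix a real Hilbert basis $(e_k)_{k\geq 1}$ of $\mathscr{Z}_{1,\R}$ and let $\pi_n:\mathscr Z_1'\to\R^n$ denote the cylindrical projection $x\mapsto(\langle x,e_i\rangle)_{i\leq n}$, with canonical section $\iota_n:\R^n\hookrightarrow\mathscr Z_1'$. Pushing \eqref{eq.transport} forward through cylindrical test functions $\varphi=\psi\circ\pi_n$, and disintegrating $\mu_t=\mu_t^n\otimes\mu_t^{n,y}$ with $\mu_t^n=(\pi_n)_\sharp\mu_t$, the averaged field $v^n(t,y)=\int\pi_n v(t,x)\,d\mu_t^{n,y}(x)$ makes $(\mu_t^n,v^n)$ solve a standard continuity equation on $\R^n$; the scalar velocity bound \eqref{scal-velo} transfers via Jensen's inequality. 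Maniglia's theorem then yields $\eta^n\in\mathfrak P(\R^n\times\Gamma_I(\R^n))$ concentrated on absolutely continuous curves $\sigma$ with $\sigma(s)\sim\mu_s^n$ and $\dot\sigma=v^n(\cdot,\sigma)$ a.e., and with $(e_t)_\sharp\eta^n=\mu_t^n$. I would lift $\eta^n$ to $\tilde\eta^n\in\mathfrak P(\mathfrak X)$ by gluing the $n$-dimensional dynamics to a frozen tail of a starting point $x\sim\mu_s$: drawing $\sigma$ from the disintegration of $\eta^n$ conditionally on $\pi_n x$, set $\gamma(t):=x+\iota_n(\sigma(t)-\pi_n x)$. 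By construction $(e_s)_\sharp\tilde\eta^n=\mu_s$, $\pi_n\gamma=\sigma$, and $(\pi_m\circ e_t)_\sharp\tilde\eta^n=\mu_t^m$ for all $n\geq m$.

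Next, I would prove tightness of $\{\tilde\eta^n\}$ in $\mathfrak P(\mathfrak X)$. The starting-point marginal is $\mu_s\in\mathfrak P(\mathscr Z_1)$, automatically tight by Ulam's theorem on a separable Hilbert space. For the curve marginal, the scalar velocity estimate \eqref{scal-velo} controls, for each fixed $k$, the increments $\langle\gamma(t)-\gamma(r),e_k\rangle$ by $\int_r^t\|v(\tau,\cdot)\|_{\mathscr Z_1'}d\mu_\tau d\tau$, which is equi-absolutely continuous in $t$ uniformly in $n$; together with the fact that closed balls in $\mathscr Z_1'$ are compact and metrisable for $\|\cdot\|_{\mathscr Z_{1,w}'}$, this provides coordinate-wise equicontinuity and an Arzel\`a--Ascoli argument in the weak topology. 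Extract a narrow limit $\tilde\eta^{n_k}\rightharpoonup\eta$. Property \ref{concen-item2} then follows by testing against bounded continuous cylindrical functions and using $(\pi_m\circ e_t)_\sharp\tilde\eta^n=\mu_t^m$ for $n\geq m$, combined with the convergence $(\iota_m)_\sharp\mu_t^m\rightharpoonup\mu_t$ in $\mathfrak P(\mathscr Z_1')$ as $m\to\infty$.

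The hard part is property \ref{concen-item1}. First, lower semicontinuity of $\|\cdot\|_{\mathscr Z_1}$ with respect to $\|\cdot\|_{\mathscr Z_{1,w}'}$, the identification \ref{concen-item2}, and the a priori integrability $\int_I\int\|x\|_{\mathscr Z_1}d\mu_t(x)dt<\infty$ (extractible from \eqref{scal-velo} on balls of $\mathscr Z_1$ where $\mu_t$ concentrates) combine via a Fatou argument to show $\eta$-a.e.\ curve satisfies $\gamma(t)\in\mathscr Z_1$ for a.e.\ $t\in I$. Second, one passes to the limit in the Duhamel identity tested against basis vectors $e_k$: Borel measurability and local boundedness of $v$, Dunford--Pettis equi-integrability (Appendix) of $(t,z)\mapsto\langle v^{n_k}(t,\pi_{n_k}z),e_k\rangle$ with respect to $dt\otimes d\tilde\eta^{n_k}$ provided by \eqref{scal-velo}, and a Lusin-type approximation of $v$ by continuous maps, together yield $\langle\gamma(t)-\gamma(s),e_k\rangle=\int_s^t\langle v(\tau,\gamma(\tau)),e_k\rangle d\tau$ for $\eta$-a.e.\ curve, for every $k$. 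Density of $\{e_k\}$ in $\mathscr Z_1$ promotes this to the Duhamel formula in $\mathscr Z_1'$, whence $\gamma\in W^{1,1}(I,\mathscr Z_1')$ solves the ODE a.e. The initial condition $\gamma(s)=x\in\mathscr Z_1$ on the support of $\mu_s$ holds by the gluing construction.
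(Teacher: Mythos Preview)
Your overall architecture matches the paper's: project onto $\R^n$ via a Hilbert basis, apply Maniglia's finite-dimensional superposition, lift to $\mathfrak X$, extract a tight narrow limit, then identify the limit. Two of your steps, as written, contain genuine gaps.

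\textbf{Tightness.} You claim that \eqref{scal-velo} bounds each increment $\langle\gamma(t)-\gamma(r),e_k\rangle$ by $\int_r^t\int\|v(\tau,\cdot)\|_{\mathscr Z_1'}\,d\mu_\tau\,d\tau$. But the right-hand side is a deterministic average over all trajectories; it does not control the increment of an individual curve, so it cannot feed an Arzel\`a--Ascoli argument. What \eqref{scal-velo} actually gives is a uniform bound on $\int_I\|\dot\gamma\|_{\mathscr Z_1'}\,dt$ in $L^1(\tilde\eta^n)$, and an $L^1$ bound on the total variation is too weak for equicontinuity of a large set of curves. The paper's remedy is a de~la~Vall\'ee~Poussin step: equi-integrability of the singleton $\{\|v\|\}$ in $L^1(\nu)$ yields a superlinear convex $\theta$ with $\int\theta(\|v\|)\,d\nu<\infty$; Jensen through the disintegration then gives $\sup_d\int g\,d\hat\eta^d<\infty$ for $g(\gamma)=\int_I\theta(\|\dot\gamma\|_{\mathscr Z_1'})\,dt$, and the superlinearity of $\theta$ is precisely what makes the sublevels $\{g\le c\}$ equicontinuous (and relatively compact in $\Gamma_I(\mathscr Z_{1,w}')$).

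\textbf{The claim $\gamma(t)\in\mathscr Z_1$ a.e.} You invoke a moment bound $\int_I\int\|x\|_{\mathscr Z_1}\,d\mu_t\,dt<\infty$, ``extractible from \eqref{scal-velo} on balls of $\mathscr Z_1$ where $\mu_t$ concentrates''. Proposition~\ref{prob-rep} has no ball-concentration hypothesis (that enters only in Theorem~\ref{pr.D4}), and \eqref{scal-velo} concerns $\|v(t,x)\|_{\mathscr Z_1'}$, not $\|x\|_{\mathscr Z_1}$; for $v\equiv 0$ and $\mu_t\equiv\mu$ a fixed measure with infinite $\mathscr Z_1$-moment your bound fails. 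No moment is needed: once \ref{concen-item2} is in hand, Fubini gives $\int_{\mathfrak X}\int_I 1_{\{\gamma(t)\notin\mathscr Z_1\}}\,dt\,d\eta=\int_I\mu_t(\mathscr Z_1'\setminus\mathscr Z_1)\,dt=0$. The paper packages this as a comparison of two Borel extensions of $v$ that disagree off $\mathscr Z_1$, but the content is the same.

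Two minor remarks. Your gluing lift $\gamma(t)=x+\iota_n(\sigma(t)-\pi_n x)$ with $x\sim\mu_s$ is a legitimate variant of the paper's simpler embedding $\hat\eta^d=(\pi^{d,T}\times\pi^{d,T})_\sharp\eta^d$; in both cases $\gamma-\gamma(s)$ lives in the finite-dimensional span, so the tightness machinery (once repaired as above) applies to that component. And since the cylindrical test functions in \eqref{eq.transport} are built from projections in $\mathscr Z_{1,\R}'$, your basis $(e_k)$ should be taken there rather than in $\mathscr Z_{1,\R}$.
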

\begin{proof}
Let $(e_{n})_{n \in \N^*}$ be a Hilbert basis of $\mathscr Z'_{1,\R}$ and consider the following
commutative diagram,
\label{eq.dia1}
  \[\xymatrix{
  \mathscr Z'_{1} \ar[r]^{\pi^{d}} \ar_{\hat{\pi}^{d}}@{->}[rd] & \R^{d} \ar[d]^{\pi^{d,T}} \\
  & \mathscr Z'_{1} }
\]
with $\pi^{{d}}(x)=(\langle e_{1},x \rangle_{\mathscr Z'_1,\R},...,\langle e_{d},x\rangle_{\mathscr Z'_1,\R})$, $\pi^{{d,T}}(y_{1},y_{2},...,y_{d})=\sum_{j=1}^{{d}}y_{j}e_{j}$ and   $\hat{\pi}^{{d}}=\pi^{{d,T}}\circ \pi^{{d}}$.\\
The proof  splits into several steps. We first consider the image measures $\mu_t^d:=(\pi^{d})_{\sharp}\mu_t$ with respect to the sequence of projections $\pi^d$ defined above. It turns that the curve $t\in I\to\mu_t^d$ solves a continuity equation on $\R^{d}$ with a given vector field $v^{d}:\R\times\R^{d} \to \R^{d}$ satisfying a scalar velocity estimate similar to \eqref{scal-velo}. Then using the result of
S.~Maniglia \citep[Theorem 4.1]{Man}, we deduce the existence of a probability measure
$\eta^d$ on $\R^d\times \Gamma_I(\R^d)$ satisfying the relation $\mu_t^d=(e_{t})_{\sharp}\eta^d$
and concentrated on the characteristics $\dot\gamma(t)=v^d(t,\gamma(t))$. The last two steps are  the proof of tightness of the family $\{\eta^d\}_{d\in\N^*}$ and the derivation of the properties (i)-(ii) for  any limit point $\eta$ of the sequence $\{\eta^d\}_{d\in\N^*}$.

\noindent
\emph{\underline{Reduction to finite dimension}}: Define the probability  measures $\mu_{t}^{d}\in\mathfrak{P}(\R^d)$ and $\hat\mu_{t}^{d}\in\mathfrak{P}(\mathscr Z_1')$ as the image measures of $\mu_{t}$ with respect to the projections $\pi^{d}$ and $\hat \pi^{d}$ respectively, i.e.:
\begin{equation}
\label{eq.mu1}
\mu^{d}_{t}:=\pi^{d}_{\sharp}\mu_{t}\,, \qquad  \hat\mu^{d}_{t}:=\hat\pi^{d}_{\sharp}\mu_{t}\,.
\end{equation}
By using the Liouville equation \eqref{eq.transport} with $\varphi(t,x)=\chi(t)\phi(x)$, such that $\chi\in \mathscr C_{0}^\infty(I)$ and $\phi=\psi\circ \pi^d$, $\psi\in\mathscr C_0^\infty (\R^d)$, one obtains in the sense of distribution $\mathscr D'(I,\R)$,
\begin{eqnarray}
\label{trans-deriv}
\frac{d}{dt} \int_{\mathscr{Z}_1} \phi(x) \,d\mu_t(x)&=&\int_{\mathscr Z_1}
\Real\langle v(t,x), \nabla \phi(x)\rangle_{\mathscr Z_1'} \,d\mu_t(x)\,\\
 \nonumber &=& \int_{\mathscr Z_1}
\Real\langle \pi^d\circ v(t,x), \nabla \psi\circ \pi^d(x)\rangle_{\mathscr Z_1'} \,d\mu_t(x)\,,
\end{eqnarray}
since
$\nabla\phi=\pi^{d,T}\circ\nabla\psi\circ\pi^d$.
One also notices that $t\in I\to g(t)=\int_{\mathscr{Z}_1} \phi(x) d\mu_t(x)$ is continuous thanks to the weak narrow continuity of the curve $t\in I\to \mu_t\in
 \mathfrak{P}(\mathscr Z_1')$. Moreover, the velocity estimate \eqref{scal-velo} implies that the right hand side of \eqref{trans-deriv} is integrable. Hence, $g$ is an absolutely continuous function in $W^{1,1}(I,\R)$ and  \eqref{trans-deriv} holds a.e.~$t\in I$.
Since $(\mathscr Z_1',||\cdot||_{\mathscr  Z_{1,w}'})$ is a Radon separable space, we can apply the disintegration theorem (see Appendix \ref{Radon} and Theorem \ref{disint}).  Hence, there exists a  $\mu_t^d$-a.e.~determined family of measures $\{\mu_{t,y},y \in \R^{d}\}\subset
 \mathfrak{P}(\mathscr  Z_1')$ such that $\mu_{t,y}(\mathscr Z_1'\setminus (\pi^d)^{-1}(y))=0$,  for $\mu_t^d$-a.e.~$y\in \R^d$, and
\begin{equation}
\label{dif-liouv}
\begin{array}{lcl}\displaystyle
\frac{d}{dt} \int_{\mathscr{Z}_1} \phi(x) \,d\mu_t(x)
&=&\displaystyle\int_{\R^d}
\left( \int_{(\pi^{{d}})^{{-1}}(y)}\langle\pi^{d}\circ v(t,x), \nabla\psi(y)\rangle_{\R^d} \; d\mu_{t,y}(x)\right) \, d\mu_t^d(y)\\
&=&\displaystyle\int_{\R^d} \langle v^d(t,y), \nabla\psi(y)\rangle_{\R^d} \; d\mu^d_t(y)\,,
\end{array}
\end{equation}
with the vector field $v^{d}$ defined as,
\begin{equation}
\label{vtd}
v^{{d}}(t,y):=\int_{(\pi^{{d}})^{{-1}}(y)}\pi^{{d}}\circ v(t,x)\;d\mu_{t,y}(x)\,,
\; \text{ for } \mu_t^d-a.e. \;y \in \R^{{d}}\, \text{ and } a.e.~t\in I\,.
\end{equation}
Moreover, repeating the same computation as in the r.h.s of \eqref{trans-deriv}-\eqref{dif-liouv}, one shows,
\begin{equation}
\label{est-approx}
\begin{array}{lcl}
\left|\displaystyle\int_{\R^d} \langle v^d(t,y),f(y)\rangle_{\R^d} \;d\mu_t^d(y)\right|
&\leq& \left|\displaystyle \int_{\mathscr Z_1} \langle \pi^d\circ v(t,x),f\circ\pi^d(x)\rangle_{\mathscr Z_1'} \;d\mu_t(x)\right| \medskip \\
&\leq& ||v(t,\cdot)||_{L^1(\mathscr Z_1,\mu_t)} \; ||f||_{L^\infty(\R^d,\mu_t^d)}\,,
\end{array}
\end{equation}
for any $f\in L^\infty(\R^d,\mu_t^d)$. Hence, the vector field $v^d$ satisfies the scalar velocity estimate,
$$
\int_I\int_{\R^d} ||v^d(t,y)||_{\R^d} \;d\mu^d_t(y)dt\leq \int_I\int_{\mathscr Z_1} ||v(t,x)||_{\mathscr Z_1'} \;d\mu_t(x)dt <\infty\,.
$$
Notice also that the curve  $t\in I\to\mu_t^d\in\mathfrak{P}(\R^d)$  is narrowly continuous. Indeed, for any bounded continuous function $\varphi\in C_b(\R^d,\R)$ we have  that $\varphi\circ\pi^d\in C_b(\mathscr Z_{1,w}',\R)$. So, the map $t\to\int_{\R^d} \varphi(y) d\mu_t^d(y)=
\int_{\mathscr Z_1'} \varphi\circ\pi^d(x) d\mu_t(x)$ is continuous since
$t\in I\to\mu_t^d\in\mathfrak{P}(\mathscr Z_1')$ is weakly narrowly continuous.
Furthermore, by multiplying \eqref{dif-liouv} with $\chi\in\mathscr C_0^\infty(I)$ and integrating with respect to time, one obtains the following Liouville equation,
$$
\int_I \int_{\R^d} \partial_t\varphi(t,y)+\langle v^d(t,y),\nabla\varphi(t,y)\rangle_{\R^d}\;
d\mu_t^d(y)\,dt=0\,,
$$
for all $\varphi(t,y)=\chi(t)\psi(y)\in \mathscr C_{0}^\infty(I\times \R^d)$. The latter equation extends to all  $\varphi\in \mathscr C_{0}^\infty(I\times \R^d)$ by a density argument.
Therefore, we have at hand all the ingredients to use the probabilistic representation of
\cite[Theorem 4.1]{Man}. Hence, for each $d\in\N^*$ there exists a finite measure
$\eta^d\in\mathfrak{P}(\R^d\times \Gamma_I(\R^d))$ satisfying:
\begin{enumerate}[label=\textnormal{(\alph*)}]
  \item \label{itm:1} $\eta^d$ is concentrated on the set of curves
    $\gamma\in   W^{1,1}(I, \R^d)$ that are solutions of the initial value problem $\dot\gamma(t)= v^d(t,\gamma(t))$ for  a.e. $t\in I$ with $\gamma(s)=x\in\R^d$ and $s\in I$ fixed.
  \item \label{itm:2} $\mu^d_t=(e_{t})_{\sharp}\eta^d$ for any $t\in I$.
\end{enumerate}

\bigskip
\noindent
\emph{\underline{Tightness}}: Recall that the space $\mathfrak X$ denotes  $\mathscr Z_{1}'\times \Gamma_{I}(\mathscr Z_{1}')$ endowed with the product norm,
$$
||\cdot||_{\mathscr Z_{1,w}'}+
||\cdot||_{\Gamma_{I}(\mathscr Z_{1,w}')}\,.
$$
Remark that $\mathfrak X$ is a separable metric space. Consider in the following the family of measures $\hat{\eta}^{d} \in \mathfrak{P}(\mathfrak X)$ defined by the relation,
$$
\hat{\eta}^{d}:=(\pi^{d,T}\times \pi^{d,T})_{\sharp}\eta^{d}\,.
$$
In particular, for any bounded Borel function  $\varphi:\mathscr Z_1'\to\R$ and $t \in I$, we have:
\begin{equation}
\label{eq.varphicyl}
\int_{\mathscr Z_{1}'}\varphi \;d\hat{\mu}_{t}^{d}=\int_{\R^{d}}\varphi\circ \pi^{d,T} \,d\mu_{t}^{{d}}=\int_{\R^{d}\times
  \Gamma_{I}(\R^{d})}\varphi(\pi^{d,T}\circ\gamma(t))\; d\eta^{{d}}
=\int_{\mathfrak X}\varphi (\gamma(t)) \;
d\hat{\eta}^{d}.
\end{equation}
We claim that the sequence $\{\hat{\eta}^{d}\}_{d \in  \N^*}$ is tight in   $\mathfrak{P}(\mathfrak X)$. To prove this fact we use a criterion taken from \cite{AGS} and recalled in the appendix Lemma \ref{lem2}. Choose the maps $r^{1}$ and $r^{2}$ defined respectively  on
$\mathfrak X$ as
$$
r^{1}:(x,\gamma)\in\mathfrak X \mapsto x \in \mathscr Z_{1}'\, \qquad \text{ and } \qquad r^{{2}}:(x,\gamma)\in
\mathfrak X\mapsto \gamma-x \in \Gamma_{I}(\mathscr
Z_{1}'). $$
Notice that since the map $r=r^{{1}}\times r^{2}: \mathfrak X\to \mathfrak X$ is a homeomorphism
then $r$ is proper, in the sense that  the inverse images of compact subsets are compact.
Using the concentration property \ref{itm:1} and \eqref{eq.varphicyl}, one shows that $(r^{1})_{\sharp}\hat{\eta}^{{d}}=\hat{\mu}^{{d}}_{s}$  for any $d \in \N^*$ and for some fixed $s\in I$.
Besides, the family  $\{\hat{\mu}^{{d}}_{s}\}_{d \in \N^*}$ is (weakly) tight in
$\mathfrak{P}(\mathscr Z_{1}')$, since $\hat\mu^d_{s}\underset{d\to\infty}{\rightharpoonup} \mu_{s}$ weakly narrowly in $\mathfrak{P}(\mathscr Z_{1}')$ and $\mathscr Z_{1,w}'$ is a separable Radon space  (see Appendix \ref{Radon}).\\
Using  Dunford-Pettis Theorem \ref{dunf} and equi-integrability, the scalar velocity estimate
\eqref{scal-velo} leads to the existence of a nondecreasing super-linear convex
function $\theta:\R^{+} \to [0,\infty]$ such that:
$$
\int_{I}\int_{\mathscr Z_{1}}\theta(\|v(t,x)\|_{\mathscr Z_{1}'})\; d\mu_{t}(x)dt < +\infty.
$$
Indeed, by setting for all  $\alpha,\beta \in I$ and for every Borel
subset $E$ of $\mathscr Z_{1}'$
\begin{equation}
\label{nu}
\nu([\alpha,\beta]\times E):=\int_{\alpha}^{\beta}\mu_{t}(E)\,dt,
\end{equation}
we have the equality
$$\int_{I}\int_{\mathscr
Z_{1}}\|v(t,x)\|_{\mathscr Z_{1}'}\;d\mu_{t}(x)dt=\int_{I\times\mathscr
 Z_{1}}\|v(t,x)\|_{\mathscr Z_{1}'}\;d\nu.
 $$
Since the singleton  $\{ v \}$ is a compact set, Dunford-Pettis Theorem \ref{dunf}
ensures that $\{ v \}$  is equi-integrable. Hence, Lemma \ref{equi-int} in the appendix leads to  the existence of the aforementioned function $\theta$. We are now in position to  prove the tightness of the family $\{(r^{2})_{\sharp}\hat{\eta}^{{d}}\}_{d \in \N^*}$. For that we consider the functional
  \begin{equation*}
   g(\gamma)=
   \left\{
\begin{array}{lcl}
  \displaystyle \int_{I}\theta(\|\dot{\gamma}(t)\|_{\mathscr Z_1'})\, dt, &\text{ if }& \gamma
   \in W^{1,1}(I,\mathscr Z_{1}'), \text{ and }  \;\gamma(s)=0\,,\\
   +\infty & \text{ if }& \gamma
   \notin W^{1,1}(I,\mathscr Z_{1}'), \text{ or }  \;\gamma(s)\neq 0\,.
\end{array}
\right.
\end{equation*}
Therefore, using the concentration property \ref{itm:1}, 
\begin{equation}
\label{est.g}
\int_{\Gamma_{I}(\mathscr Z_{1,w}')}g(\gamma)
d(r^{2})_{\sharp}\hat{\eta}^{{d}}(\gamma)=\int_{I}\int_{\mathfrak X}\theta(\|\dot\gamma(t)\|_{\mathscr Z_1'})
\,d\hat\eta^{d}\,dt
=\int_{I}\int_{\R^d}\theta(\|\pi^{d,T}\circ v^d(t,x)\|_{\mathscr Z_1'})
\,d\mu^{d}_{t}\,dt\,.
\end{equation}
Using the monotonicity of the function $\theta$, \eqref{vtd} and Jensen's inequality, one shows
\begin{eqnarray*}
\theta(\|\pi^{d,T}\circ v^d(t,x)\|_{\mathscr Z_1'})\leq \theta(\| v^d(t,x)\|_{\R^d})&\leq& \theta\big(\big\|\int_{(\pi^{{d}})^{{-1}}(x)}\pi^{{d}}\circ v(t,y)\;d\mu_{t,x}(y)\big\|_{\R^d}\big)\\
&\leq&\int_{(\pi^{{d}})^{{-1}}(x)} \theta\big(||v(t,y)||_{\mathscr Z_1'}\big)\;d\mu_{t,x}(y)\,.
\end{eqnarray*}
So, the disintegration Theorem \ref{disint} and \eqref{est.g} yield the estimate,
\begin{eqnarray*}
\int_{\Gamma_{I}(\mathscr Z_{1,w}')}g(\gamma)\;
d(r^{2})_{\sharp}\hat{\eta}^{{d}}(\gamma)\leq \int_{I}\int_{\mathscr Z_{1}}\theta(\|v(t,x)\|_{\mathscr
  Z_{1}'})
\,d\mu_{t}(x)\,dt<\infty\,.
\end{eqnarray*}
Then, by Lemma \ref{lm} the family $\{(r^{2})_{\sharp}\hat{\eta}^{d}\}_{d \in \N}$ is (weakly)
tight if we prove that the functional $g$ has relatively compact sublevels on
$(\Gamma_{I}(\mathscr Z_{1}'),||\cdot||_{\Gamma_{I}(\mathscr Z_{1,w}')})$.
Indeed, consider the set $\mathscr A=\{ \gamma\in \Gamma_{I}(\mathscr Z_1'), g(\gamma)\leq c\}$
for some $c\geq 0$. Then by the Arzel\`{a}-Ascoli theorem $\mathscr A$ is a relatively compact set of $(\Gamma_{I}(\mathscr Z_{1,w}'),||\cdot||_{\Gamma_{I}(\mathscr Z_{1,w}')})$ since:
\begin{itemize}
  \item For any given $t\in \bar I$, the set $\mathscr A(t):=\{\gamma(t), \gamma\in \mathscr A\}$ is bounded in
  $\mathscr Z_{1}'$ . In fact, by Jensen's inequality, we have
  $$
  \theta(||\gamma(t)||_{\mathscr Z_1'})\leq \theta\big(\int_s^t ||\dot\gamma(\tau)||_{\mathscr Z_1'} d\tau\big)\leq g(\gamma)\leq c.
  $$
Remember that $\theta$ is superlinear. Hence, the set $\mathscr A(t)$ is relatively compact in $\mathscr Z_{1,w}'$.
  \item Equicontinuity: For any $t_0\in I$, $\gamma\in \mathscr A$ and $M>0$, we have
  \begin{equation*}
  \label{DP-est}
  ||\gamma(t)-\gamma(t_0)||_{\mathscr Z_{1,w}'}\leq \int_{t_0}^t
  ||\dot \gamma(\tau)||_{\mathscr Z_1'} d\tau\leq
  \int_{\{||\dot \gamma(\tau)||_{\mathscr Z_1'}\leq M\}\cap[t_0,t]} M d\tau+ \int_{||\dot \gamma(\tau)||_{\mathscr Z_1'}> M} ||\dot \gamma(\tau)||_{\mathscr Z_1'} d\tau\,.
  \end{equation*}
  Hence, using Lemma \ref{equi-int}, one gets the equicontinuity of the
set $\mathscr A$.
\end{itemize}
It still to check that $\mathscr A$ is relatively (sequentially) compact in $(\Gamma_{I}(\mathscr Z_{1}'),||\cdot||_{\Gamma_{I}(\mathscr Z_{1,w}')})$. For that consider a sequence
$(\gamma_n)_{n\in\N^*}$ in $\mathscr A$ and notice by Lemma \ref{equi-int} that the family
$\mathcal F=\{t\to||\dot\gamma_n(t)||_{\mathscr Z_1'}, n\in\N^*\}$ is bounded and equi-integrable
in $L^1(I,dt)$. Hence, by Dunford-Pettis Theorem \ref{dunf}, $\mathcal F$ is relatively sequentially compact in $L^1(I,dt)$ for the weak topology $\sigma(L^1,L^\infty)$. So, there exists a subsequence, still denoted by $(||\dot\gamma_n(\cdot)||_{\mathscr Z_1'})_{n\in\N^*}$, such that it converges to $m(\cdot)\in L^1(I,dt)$. Moreover, we have the following estimate for any $t,t_0\in \bar I$,
\begin{equation}
\label{eq.var}
||\gamma_n(t)-\gamma_n(t_0)||_{\mathscr Z_1'} \leq \int_{t_0}^t ||\dot \gamma_n(s)||_{\mathscr Z_1'} \,ds\,.
\end{equation}
Using the relative (sequential)  compactness of $\mathscr A$ in $(\Gamma_{I}(\mathscr Z_{1,w}'),||\cdot||_{\Gamma_{I}(\mathscr Z_{1,w}')})$ proved before, we get a convergent subsequence $(\gamma_{n_k})_{k\in\N^*}$ to a given $\gamma\in\Gamma_{I}(\mathscr Z_{1,w}')$.  In particular, for each $t\in\bar I$ and $j\in\N^*$ we have
$$
\langle \gamma_{n_k}(t)-\gamma(t),e_j\rangle_{\mathscr Z'_{1,\R}}\underset{k\to \infty}{\rightarrow}0\,.
$$
Now, the Fatou's lemma  yields
\begin{eqnarray*}
||\gamma(t)-\gamma(t_0)||_{\mathscr Z_1'}^2=\sum_{j=1}^\infty \liminf_{k\to\infty}\big|\langle\gamma_{n_k}(t)-
\gamma_{n_k}(t_0),e_j\rangle_{\mathscr Z'_{1,\R}}\big|^2 \leq \liminf_{k\to\infty} ||\gamma_{n_k}(t)-\gamma_{n_k}(t_0)||_{\mathscr Z_1'}^2 \,,
\end{eqnarray*}
and subsequently \eqref{eq.var} gives
\begin{equation*}
||\gamma(t)-\gamma(t_0)||_{\mathscr Z_1'}\leq \int_{t_0}^t m(s)\,ds\,.
\end{equation*}
So, we conclude that the limit point $\gamma\in \Gamma_I(\mathscr Z_1')$. Applying now Lemma \ref{lm}, one obtains the tightness of
the family $\{(r^{2})_{\sharp}\hat{\eta}^{d}\}_{d \in \N}$ in $\mathfrak{P}(\Gamma_{I}(\mathscr Z_{1}'))$.
Let $\eta$ be any (weak) narrow limit point of
$\hat{\eta}^{d}$.  Hence taking the limit $d_i \to
+\infty$ in \eqref{eq.varphicyl} and using the weak narrow convergence $\hat\eta^{d_i}\underset{i\to\infty}{\rightharpoonup} \eta$, one proves
\begin{equation*}
\int_{\mathscr Z_{1}'}\varphi\, d\mu_{t}=\int_{\mathfrak X}\varphi
(\gamma(t)) \;d\eta(x,\gamma)=\int_{\mathscr
  Z_{1}'}\varphi \;d(e_t)_\sharp\eta\,,
\end{equation*}
for all  $\varphi \in C_b(\mathscr Z_{1,w}',\R)$ and $t \in I$.
Since $\mathscr Z_{1,w}'$ is a Suslin space the above identity extends to any
 bounded Borel function on $\mathscr Z_{1}'$ (see Appendix \ref{Radon} and \ref{denset}).
This proves \ref{concen-item2}.

\bigskip
\noindent
\emph{\underline{The concentration property}}:
Let $v_1,v_2$ two Borel extensions of the vector field $v$ to  $\R\times \mathscr Z_1'$ such that $v_1(t,x)=x_0$, with $0\neq x_0\in\mathscr Z_1'$ and $v_2(t,x)=0$ for any $x\notin \mathscr Z_1$. By \ref{concen-item2}, we remark
$$
\int_{\mathfrak X}\int_I ||v_1(t,\gamma(t))-v_2(t,\gamma(t))||_{\mathscr Z_1'}\,dt d\eta
=\int_I \int_{\mathscr Z_1'}||v_1(t,x)-v_2(t,x)||_{\mathscr Z_1'} \,d\mu_t(x) dt=0\,.
$$
So, $\int_I  1_{\{\gamma(t)\notin \mathscr Z_1\}} dt=0$ for $\eta$-a.e.,
which means that  $\gamma(t)\in \mathscr Z_1$ for  a.e.~$t\in I$ and
$\eta$-a.e.~$(x,\gamma)\in\mathfrak X$.  \\
A similar estimate yields,
\begin{equation}
\label{L1}
\int_{\mathfrak X}\int_I ||v(t,\gamma(t))||_{\mathscr Z_1'}dt d\eta
=\int_I \int_{\mathscr Z_1'}||v(t,x)||_{\mathscr Z_1'} d\mu_t(x) dt<\infty\,,
\end{equation}
which means that  $||v(t,\gamma(t))||_{\mathscr Z_1'}\in L^1(I,dt)$ for $\eta$-a.e.

Let $w^{d'}:\R\times\R^{d'}\to \R^{d'}$ be a bounded uniformly continuous vector
field and define $\hat w:\R\times \mathscr Z_1'\to \mathscr Z_1'$ such that
$\hat w(\tau,x)=\pi^{d',T}\circ w^{d'}(\tau,\pi^{d'}(x))$. Notice that
$\hat w(t,\pi^{d,T}\circ \pi^{d}(x))=\hat w(t,x)$ for any $d\geq d'$.
We also define, as in \eqref{vtd},
the projected vector field,
\begin{equation*}
\hat w^{{d}}(t,y):=\int_{(\pi^{{d}})^{{-1}}(y)}\pi^{{d}}\circ \hat w(t,x)\;d\mu_{t,y}(x)\,,
\; \text{ for } \mu_t^d-a.e. \;y \in \R^{{d}}\, \text{ and } a.e.~t\in I\,.
\end{equation*}
Since $\mu_{t,y}((\pi^{d})^{-1}(y))=1$ for $\mu_t^d$-almost everywhere $y\in\R^d$, one remarks
\begin{eqnarray*}
\hat w^{d}(t,y)=\int_{(\pi^{{d}})^{{-1}}(y)\ni x}\pi^{{d}}\circ \hat w(t,\pi^{d,T}\circ \pi^{d}(x))\;d\mu_{t,y}(x)
= \pi^{d}\circ \hat w(t,\pi^{d,T}(y))\,.
\end{eqnarray*}
 Then using the concentration
property \ref{itm:1}, one shows for any $d\geq d'$,
\begin{eqnarray*}
\int_{\mathfrak X}||\gamma(t)-x-\int_{s}^{t} \hat w(\tau,\gamma(\tau))\,d\tau
||_{\mathscr Z_{1,w}'} \,d\hat\eta^{d}(x,\gamma)&\leq&
\int_{\mathfrak X_d}||\gamma(t)-x-\int_{s}^{t}\hat w^d(\tau,\gamma(\tau))\,d\tau
||_{\R^d} \,d\eta^{d}\\
&\leq& \int_{s}^{t}\int_{\R^{d}}||v^{d}(\tau,x)-\hat w^d(\tau,x)||_{\R^d}\,d\mu_{\tau}^{d}\,d\tau.
\end{eqnarray*}
Here $\mathfrak X_d$ denotes $\R^d\times \Gamma_I(\R^d)$. Using the disintegration Theorem \ref{disint} and a similar estimate as \eqref{est-approx}, one proves
\begin{eqnarray*}
\int_{\R^{d}}||v^{d}(\tau,x)-\hat w^d(\tau,x)||_{\R^d}\,d\mu_{\tau}^{d}(x)
\leq \int_{\mathscr Z_1'}||v(\tau,x)-\hat w(\tau,x)||_{\mathscr Z_1'}\,d\mu_{\tau}(x)\,.
\end{eqnarray*}
Notice that the map $(x,\gamma)\to ||\gamma(t)-x-\int_{s}^{t} \hat w(\tau,\gamma(\tau))\,d\tau
||_{\mathscr Z_{1,w}'}$ is continuous on $\mathfrak X$. Hence, by the narrow convergence $\hat\eta^d\rightharpoonup \eta$, one obtains
\begin{equation}
\label{eq.finic1}
\int_{\mathfrak X}||\gamma(t)-x-\int_{s}^{t} \hat w(\tau,\gamma(\tau))\,d\tau
||_{\mathscr Z_{1,w}'} \,d\eta\leq \int_s^t\int_{\mathscr Z_1'}||v(\tau,x)-\hat w(\tau,x)||_{\mathscr Z_1'}\,d\mu_{\tau}d\tau\,.
\end{equation}
Now, we use an approximation argument. For any $\varepsilon>0$ there exists, by  dominated convergence, a $d\in\N^*$ such that
\begin{equation}
\label{inq1}
\int_s^t\int_{\mathscr Z_1'}||v(\tau,x)-\hat\pi^d\circ v(\tau,x)||_{\mathscr Z_1'}
d\mu_\tau d\tau\leq \varepsilon\,.
\end{equation}
Remember that $\hat\pi^d\circ v(\tau,x)=\sum_{i=1}^d \langle v(\tau,x)
,e_i\rangle_{\mathscr Z_1'} \,e_i$ with $
\langle v(\tau,x)
,e_i\rangle_{\mathscr Z_1'}\in L^1([s,t]\times\mathscr Z_{1,w}',\nu)$ where the measure $\nu$ is given by \eqref{nu}. Remark that the metric space $[s,t]\times \mathscr Z_{1,w}'$ is  Suslin.
Hence, $\nu$ is a Radon finite measure on $[s,t]\times \mathscr Z_{1,w}'$.
According to Corollary \ref{lip-dens} the space of Lipschitz bounded functions $\Lip([s,t]\times \mathscr Z_{1,w}')$ is dense in $L^1([s,t]\times\mathscr Z_{1,w}',\nu)$. So, there exist $d$ Lipschitz bounded vector fields $w_i:[s,t]\times \mathscr Z_{1,w}'\to \R$  such that for
$i=1,\cdots,d$,
\begin{equation}
\label{inq2}
\int_{[s,t]\times \mathscr Z_1'}|\langle v(\tau,x),e_i\rangle_{\mathscr Z_1'} -w_i(\tau,x)| \;
d\nu(t,x)\leq \frac{\varepsilon}{\sqrt{d}}\,.
\end{equation}
Again, by dominated convergence, there exists $d'\in\N^*$ such that for $i=1,\cdots,d$,
\begin{equation}
\label{inq3}
\int_{[s,t]\times \mathscr Z_1'}|w_i(\tau,x)-w_i(\tau,\hat\pi^{d'}(x))|\;
d\nu(t,x)\leq \frac{\varepsilon}{\sqrt{d}}\,.
\end{equation}
Notice that $\hat\pi^{d'}(x)\to x$ in $\mathscr Z_{1,w}'$ when  $d'\to\infty$.
Define the following  vector field,
\begin{equation}
\label{hatw}
\hat w(\tau,x)=\sum_{i=1}^d w_i(\tau,\hat\pi^{d'}(x)) \,e_i\,.
\end{equation}
Then it is easy to see that  $\hat w:[s,t]\times \mathscr Z_{1}'
\to  \mathscr Z_{1}'$ is bounded Lipschitz since $\mathscr Z_1'$
is continuously embedded into $\mathscr Z_{1,w}'$. So, we can apply the estimate
\eqref{eq.finic1} with the choice \eqref{hatw}. Indeed, one easily checks that
$\hat\pi^{n}\circ\hat w(t,\hat\pi^{n}(x))=\hat w(t,x)$ for any $n\geq d_0=\max(d,d')$
(this determines a bounded uniformly continuous vector field $w^{d_0}:[s,t]\times \R^{d_0} \to \R^{d_0}$). The triangle inequality with \eqref{eq.finic1} and \ref{concen-item2}, yield
\begin{eqnarray*}
\int_{\mathfrak X}||\gamma(t)-x-\int_{s}^{t} v(\tau,\gamma(\tau))\,d\tau
||_{\mathscr Z_{1,w}'} \,d\eta\leq 2 \int_s^t\int_{\mathscr Z_1'}||v(\tau,x)-\hat w(\tau,x)||_{\mathscr Z_1'}\,d\mu_{\tau}d\tau\,.
\end{eqnarray*}
Estimating the right hand side with \eqref{inq1}-\eqref{inq3}, one obtains
$$
\int_s^t\int_{\mathscr Z_1'}||v(\tau,x)-\hat w(\tau,x)||_{\mathscr Z_1'}\,d\mu_{\tau}d\tau\leq
3\varepsilon\,.
$$
So, for any $t\in I$, there exists an $\eta$-negligible set $\mathcal N$ such that
\begin{equation}
\label{duh-conc}
\gamma(t)=x+\int_{s}^{t} v(\tau,\gamma(\tau))\,d\tau\,,\quad \text{ for }
(x,\gamma)\in\mathfrak X\setminus \mathcal N\,.
\end{equation}
Remember that $\gamma$ are continuous curves in $\mathscr Z_1'$ with
$v(\tau,\gamma(\tau))\in L^1(I,\mathscr Z_1')$ for $\eta$-a.e.~according to \eqref{L1}. So,
taking a dense sequence $(t_i)_{i\in\N^*}\in I$ and using the latter properties, on can obtain an $\eta$-negligible $\mathcal N_0$ set independent of $t$ such that for all $(x,\gamma)\in
\mathfrak X\setminus \mathcal N_0$ we have
$\gamma(t)=x+\int_{s}^{t} v(\tau,\gamma(\tau))\,d\tau$ for all $t\in I$. Hence,
$\gamma$ is an absolutely continuous curve in $W^{1,1}(I,\mathscr Z_1')$ with
$\dot \gamma(t)=v(t,\gamma(t))$ a.e.~$t\in I$ and $\gamma(s)=x$.
\end{proof}

\bigskip
\noindent
{\bf{Proof of Theorem \ref{pr.D4}:}}
Suppose that $\mu_t(B_{\mathscr Z_1}(0,R))=1$ for some $R>0$. We will use the properties of the measure $\eta$ provided by Proposition \ref{prob-rep}. In particular, by H\"older inequality and
Proposition \ref{prob-rep}-\ref{concen-item2}, one shows for any $p\geq 1$,
\begin{equation}
\label{winf}
\int_{\mathfrak X}\left(\int_I ||\gamma(t)||^p_{\mathscr Z_1}dt\right)^{\frac{1}{p}}
 d\eta
\leq\left(\int_I \int_{\mathfrak X} ||\gamma(t)||^p_{\mathscr Z_1}
d\eta dt  \right)^{\frac{1}{p}}
=\left( \int_I \int_{\mathscr Z_1'}||x||^p_{\mathscr Z_1} d\mu_t dt\right)^{\frac{1}{p}}
\leq R |I|^{\frac{1}{p}}\,.
\end{equation}
Hence, Fatou's lemma gives
$$
\int_{\mathfrak X}||\gamma(t)||_{L^\infty(I,\mathscr Z_1)} d\eta=\int_{\mathfrak X}\underset{p\to\infty}{\liminf}\left(\int_I ||\gamma(t)||^p_{\mathscr Z_1}dt\right)^{\frac{1}{p}}
 d\eta
\leq  R\,.
$$
Thus, there exists an $\eta$-negligible set $\mathscr N$ such that for
all $(x,\gamma)\in \mathfrak X\setminus \mathscr N$, the norm $||\gamma(t)||_{L^\infty(I,\mathscr Z_1)}$ is finite and the Duhamel formula \eqref{duh-conc} holds true for all $t\in I$. Since the vector field is bounded on bounded sets of $\mathscr Z_1$, one sees that
$\dot \gamma(t)=v(t,\gamma(t))\in L^\infty(I,\mathscr Z_1')$. So, $\eta$ concentrates on the set
$\mathscr A\subset \mathfrak X$ of weak solutions $\gamma\in L^\infty(I,\mathscr Z_1) \cap W^{1,\infty}(I,\mathscr Z_1')$ of the initial value problem \eqref{eq.cauchy} satisfying
$\gamma(s)=x$ for some fixed $s\in I$. Consider now the  subset
$$
\mathscr B=\{ (x,\gamma)\in \mathscr Z_1\times \Gamma_I(\mathscr Z_1'): T_{min}(x,s) \text{ or }T_{max}(x,s)\in  \bar I\}\,,
$$
where $T_{min}(x,s)$ and $T_{max}(x,s)$ are the minimal and maximal time of existence of the strong solution of  \eqref{eq.cauchy} with initial condition $\gamma(s)=x\in\mathscr Z_1$.
Using  the Definition \ref{LWP} of local well posedness and the blowup alternative,
we see that $\mathscr A \cap \mathscr B=\varnothing$; since we can not find a weak solution that
extends a strong solution beyond it maximal interval of existence. So, the set $\mathscr B\subset
\mathscr A^c$ is  $\eta$-negligible  and $\eta$ concentrates on $\mathscr B^c\cap
\mathscr A$ a subset of strong solutions of the initial value problem \eqref{eq.cauchy} defined at least on the whole interval $\bar I$. Hence, we conclude that for $\eta$-almost everywhere $\gamma(t)=\Phi(t,s)\gamma(s)$ for any $t\in \bar I$, with $\Phi$ is the local flow  provided by (LWP). Moreover, if we take  $\varphi=1_{\mathscr B_0}$, we get
$$
\int_{\mathscr Z_{1}} 1_{\mathscr B_0}(x) \;d\mu_{s}=\int_{\mathfrak X}
1_{\mathscr B_0}(\gamma(s)) d\eta=\int_{\mathscr B^c\cap \mathscr A}
1_{\mathscr B_0}(\gamma(s)) d\eta =1\,,
$$
with $\mathscr B_0=\{x\in\mathscr Z_1: (T_{min}(x,s),T_{max}(x,s))\supseteq \bar I \}$.
This proves that $(T_{min}(s,x),T_{max}(s,x))\supseteq \bar I$ for $\mu_s$-almost everywhere $x\in\mathscr Z_1$.
Using these concentration properties of $\eta$ with Proposition \ref{prob-rep}-(ii), we deduce for any $t\in I$ and any bounded Borel function $\varphi:\mathscr Z_1\to \R$,
\begin{equation}
\label{eq.ident}
\int_{\mathscr Z_{1}}\varphi \;d\mu_{t}=\int_{\mathfrak X}
\varphi(\gamma(t)) d\eta=\int_{\mathfrak X}
\varphi\circ \Phi(t,s)[\gamma(s)]\; d\eta=\int_{\mathscr Z_{1}}\varphi \circ
\Phi(t,s)(x)\; d\mu_{s}\,.
\end{equation}
The map $\Phi(t,s):\mathscr B_0\to\mathscr Z_1 $ is  Borel thanks to the Definition \ref{LWP}-(iv). In particular, we obtain that  $\mu_t=\Phi(t,s)_\sharp\mu_s$ for all $t\in I$.  $\hfill\square$

\bigskip
\noindent
{\bf Proof of Theorem \ref{pr.D5}}. Recall that $v$ in this case is a continuous vector field
$v:\R\times \mathscr Z_1\to\mathscr Z_0\subset \mathscr Z_1'$ satisfying the scalar velocity estimate \eqref{scal-velo-1}. Using Proposition \ref{prob-rep}-(ii), one proves
\begin{equation*}
\int_{\mathfrak X}\int_I ||v(t,\gamma(t))||_{\mathscr Z_0}dt d\eta
=\int_I \int_{\mathscr Z_0}||v(t,x)||_{\mathscr Z_0} d\mu_t(x) dt<\infty\,.
\end{equation*}
So, we deduce that $v(t,\gamma(t))\in L^1(I,\mathscr Z_0)$ for $\eta$-a.e. Then the Duhamel formula \eqref{duh-conc} implies that $\eta$  concentrates actually on absolutely continuous curves $\gamma\in W^{1,1}(I,\mathscr Z_0)$. Furthermore, using the estimate \eqref{winf}, with $p=2$, we see also that
$\gamma\in L^2(I,\mathscr Z_1)$ for $\eta$-a.e. Therefore, the measure
$\eta$ concentrates on the solutions $\gamma\in L^2(I,\mathscr Z_1)\cap W^{1,1}(I,\mathscr Z_0)$ of the initial value problem \eqref{eq.cauchy}. Now, we claim  that
\eqref{hyp.v} implies the uniqueness of those "weak" solutions. Let $\gamma_1,\gamma_2
\in L^2(I,\mathscr Z_1)\cap W^{1,1}(I,\mathscr Z_0)$ two solutions of \eqref{eq.cauchy} such that for some fixed  $s\in I$, $\gamma_1(s)=\gamma_2(s)$.  Since $\gamma_i, i=1,2$, are  continuous $\mathscr Z_0$-valued functions on $\bar I$, we take
 $$
 M=\max_{i=1,2}(\sup_{t\in \bar I} ||\gamma_i||_{\mathscr Z_0})\,.
$$
Notice that the case $M=0$ is trivial. The hypothesis \eqref{hyp.v} with
the  Duhamel formula \eqref{duh-conc}, give the existence of a constant
$C(M,\bar I)>0$ such that for any $t\in \bar I$,
\begin{equation}
\label{eq-nul0}
\begin{array}{lcl}
||\gamma_1(t)-\gamma_2(t)||_{\mathscr Z_0}&\leq& \displaystyle\int_s^t
||v(\tau,\gamma_1(\tau))-v(\tau,\gamma_2(\tau))||_{\mathscr Z_0} d\tau\\
&\leq& C(M,\bar I) \displaystyle\int_s^t
(||\gamma_1(\tau)||^2_{\mathscr{Z}_1}+
||\gamma_2(\tau)||^2_{\mathscr{Z}_1}) \,||\gamma_1(\tau)-\gamma_2(\tau)||_{\mathscr{Z}_0} d\tau.
\end{array}
\end{equation}
For each $t\in  \bar I$ one can find  a nontrivial  interval $I(t)\subset \bar I$ containing $t$ such that
$$
C(M,\bar I) \int_{I(t)} (||\gamma_1(\tau)||^2_{\mathscr{Z}_1}+
||\gamma_2(\tau)||^2_{\mathscr{Z}_1})  d\tau<1\,.
$$
Moreover, one can choose all the $I(t)$ to be  open sets of $\bar I$. Hence, the cover $(I(t))_{t\in \bar I}$ admits a finite subcover $\bar I=\cup_{i=1}^n I(t_i)$. By relabelling the $t_i$, one can assume that $s\in I(t_1)$ and $I(t_1)\cap I(t_{2})\neq\emptyset$. Then, using
\eqref{eq-nul0}, we have the bound
\begin{equation}
\label{eq-nul}
\sup_{\bar I(t_1)} ||\gamma_1(t)-\gamma_2(t)||_{\mathscr Z_0}< \sup_{\bar I(t_1)} ||\gamma_1(t)-\gamma_2(t)||_{\mathscr Z_0}\,.
\end{equation}
So, the two curves $\gamma_1$ and $\gamma_2$ coincide on the subinterval $\bar{I}(t_1)$
and the following  inequality holds true for any $t\in \bar I$ and any $r\in I(t_1)\cap I(t_2)$,
$$
||\gamma_1(t)-\gamma_2(t)||_{\mathscr Z_0}\leq C(M,\bar I) \int_r^t
(||\gamma_1(\tau)||^2_{\mathscr{Z}_1}+
||\gamma_2(\tau)||^2_{\mathscr{Z}_1}) \,||\gamma_1(\tau)-\gamma_2(\tau)||_{\mathscr{Z}_0} d\tau.
$$
Hence, a similar inequality as \eqref{eq-nul} holds true with $\bar I(t_2)$ instead of $\bar I(t_1)$. Iterating the same argument one proves that $\gamma_1=\gamma_2$ on $\bar I$.
In particular, this proves Definition \ref{LWP}-(i).

The assumption (ii) of Thm.~\ref{pr.D5} implies that the measure $\eta$ is concentrated on the set $B\cap\mathscr Z_1\times \Gamma_I(\mathscr Z_{1,w}')$. Moreover, the assumption (iii) with the above uniqueness property imply that for each $x\in \mathscr Z_1\cap B$ there exists
a unique curve $\gamma\in L^2(I,\mathscr Z_1)\cap W^{1,1}(I,\mathscr Z_0)$  satisfying the initial value problem \eqref{eq.cauchy} with $\gamma(s)=x$. This means that the measure $\eta$ is concentrated
on the set $\{(x,\gamma) : x\in \mathscr Z_1\cap B, \gamma(t)=\Phi(t,s)(x), \forall t\in \bar I\}$, where $\Phi(t,s)$ is the local flow of \eqref{eq.cauchy}. Let $A=\mathscr Z_1\cap B$, then one can prove that $A$ is $\Phi(t,s)$-invariant modulo $\mu_s$. In fact, using the properties of
$\eta$ one shows
$$
\mu_s(A\vartriangle \Phi(t,s)^{-1}(A))\leq \mu_s(\Phi(t,s)^{-1}(A)^c)
=\int_{A} 1_{A^c}(\Phi(t,s)(x)) d\mu_s=\int_{\mathfrak X} 1_{A^c}(\gamma(t)) d\eta\,.
$$
Hence, the invariance follows since $\mu_t(A^c)=0$. Now, repeating the same argument as in \eqref{eq.ident}, we obtain the claimed result in Thm.~\ref{pr.D5}.
$\hfill\square$

\appendix
\begin{center}
{\bf Appendix: Measure theoretical tools}
\end{center}
We briefly review some tools in measure theory that have been used
throughout the text.

\section{Borel sets}
Let $\mathscr Z_{1}\subset \mathscr{Z}_{0} \subset \mathscr Z_{1}'$
 a rigged Hilbert space such that $(\mathscr{Z}_1,\mathscr{Z}_0)$ is a pair of complex \emph{separable} Hilbert spaces. Notice that $\mathscr{Z}_1'$ is automatically separable. Let $\mathscr{I}:\mathscr Z_{1}\to\mathscr Z_{0}$ denotes the
 continuous embedding of $\mathscr Z_{1}$ into $\mathscr Z_{0}$, i.e.~$\mathscr I(x)=x$ for all
 $x\in\mathscr Z_{1}$. Taking the adjoint of $\mathscr I$ and identifying the spaces
 $(\mathscr{Z}_1,\mathscr{Z}_0)$ with their topological duals, one obtains a continuous linear map $\mathscr I^*: \mathscr Z_{0}\to\mathscr Z_{1}$. Moreover, the operator  $B=\mathscr I \mathscr I^*:\mathscr Z_{0}\to\mathscr Z_{0}$, is  bounded self-adjoint and nonnegative. Hence, we can define its square root $\sqrt{B}:  \mathscr Z_{0}\to\mathscr Z_{1}$. Remark that we have
 $$
 \langle \sqrt{B} x,\sqrt{B} y\rangle_{\mathscr Z_0}
 =\langle  \mathscr I^*x,\mathscr I^* y\rangle_{\mathscr Z_1}=
 \langle  x, y\rangle_{\mathscr Z_1'}\,,\quad \text{ and } \quad\langle \sqrt{B} x, \sqrt{B}y\rangle_{\mathscr Z_1}= \langle \mathscr I^* x, y\rangle_{\mathscr Z_1}=\langle  x, y\rangle_{\mathscr Z_0}.
 $$
One can also prove that ${\rm Ran}(\sqrt{B})=\mathscr Z_1$. Hence, the linear operator $A=
(\sqrt{B})^{-1}:\mathscr Z_1\subset \mathscr Z_0\to\mathscr Z_0$, with domain $\mathscr Z_1$,  is clearly self-adjoint and nonnegative. Furthermore, we have for any $x\in \mathscr D
(A)=\mathscr Z_1$,
$$
||x||_{\mathscr Z_1'}=||A^{-1} x||_{\mathscr Z_0} \,,\quad \text{ and } \quad ||x||_{\mathscr Z_1}=||A x||_{\mathscr Z_0}\,.
$$
So, the pair of Hilbert spaces $( \mathscr Z_1,\mathscr Z_1')$ identifies with
$((\mathscr D(A), ||A\cdot||_{\mathscr Z_0}),(\overline{\mathscr D(A^{-1})}^{comp},
||A^{-1} \cdot||_{\mathscr Z_0}))$ where the latter notation stands for the completion of  $\mathscr D(A^{-1})$ with respect to the norm $||A^{-1} \cdot||_{\mathscr Z_0}$.

\bigskip
As consequence of the above remarks, $\mathscr Z_1$ is a Borel subset of
$\mathscr Z_0$ and $\mathscr Z_0$ is a Borel subset of    $\mathscr Z_1'$. Indeed, consider
the sequence of  continuous functions $f_n: \mathscr Z_0\to \R$,
$$
f_n(x)=||(1+\frac{A}{n})^{-1} x||_ {\mathscr Z_1}\,.
$$
Hence, $f_n$ converges pointwise to a measurable function $f(x)=||x||_ {\mathscr Z_1}$, if $x\in
\mathscr Z_1$, $f(x)=+\infty$ otherwise and $\mathscr Z_1=f^{-1}(\R)$ is a Borel subset of
$\mathscr Z_0$. A similar argument works for  $\mathscr Z_1'$ and we have the following inclusions of Borel $\sigma$-algebras
$$
\mathscr B(\mathscr Z_1)\subset\mathscr B(\mathscr Z_0)\subset\mathscr B(\mathscr Z_1')\,.
$$
In particular, a Borel probability measure $\mu\in \mathfrak{P}(\mathscr Z_1')$ that concentrates on $\mathscr Z_1$, i.e.~$\mu(\mathscr Z_1)=1$, is a Borel probability measure in $\mathfrak{P}(\mathscr Z_1)$.

\section{Radon spaces, Tightness}
\label{Radon}
\emph{Radon spaces}: Let $X$ be a Hausdorff topological  space and  $\mathscr B(X)$ denotes the $\sigma$-algebra of Borel sets.
Recall that a Borel measure $\mu$ is a Radon measure if it is locally finite and inner regular.
A topological space is called a \emph{Radon space} if every finite Borel measure is a Radon measure. In particular, it is known that any Polish  and  more generally any Suslin space is Radon (see e.g.~\cite{Sch}). Recall that a Polish space is a topological space homeomorphic to a separable complete metric space and a Suslin space is the image of a Polish space under a continuous mapping.

\bigskip
\noindent
\emph{Tightness}:
Let $X$ be a separable metric space and $\mathfrak{P}(X)$ the set  of Borel probability measures on $X$. The narrow convergence topology in $\mathfrak{P}(X)$  is given by the neighborhood basis
$$
N(\mu,\delta,\varphi_1,\cdots,\varphi_n)=\{\nu\in \mathfrak{P}(X): \max_{i=1,\cdots,n} |\nu(\varphi_k)-\mu(\varphi_k)|<\delta\}\,,
$$
with $\mu\in\mathfrak{P}(X)$, $\delta>0$ and  $\varphi_1,\cdots,\varphi_n\in  C_b(X,\R)$. It is known, in this case, that the narrow topology on  $\mathfrak{P}(X)$ is a separable metric topology.
We say that a set $\mathscr{K}\subset \mathfrak{P}(X)$ is
\textit{tight} if,
\begin{equation}
\label{tightness0}
\forall \varepsilon>0, \exists K_{\varepsilon}\;
\text{ compact in } \; X \;\text{ such that } \;
\mu(X \setminus
K_{\varepsilon})\leq \varepsilon, \,\forall \mu \in \mathscr{K}\,.
\end{equation}
Prokhorov's theorem says that any tight set $\mathscr{K}\subset \mathfrak{P}(X)$
is relatively (sequentially) compact in  $\mathfrak{P}(X)$ in the  narrow topology.
A useful characterization is given below (see \cite[Remark 5.15]{AGS}).
\begin{lm}
\label{lm}
A set $\mathscr{K}\subset \mathfrak{P}(X)$ is tight if and only if  there exists a function $\varphi: X\rightarrow [0,+\infty]$, whose
sublevels $\{ x \in X :\; \varphi(x)\leq c\}$ are relatively compact in
$X$ and satisfying
$$
\sup_{\mu \in \mathscr{K}}\int_{X}\varphi(x)\,d\mu(x)<+\infty.
$$
\end{lm}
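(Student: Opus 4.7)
The plan is to prove both implications of the tightness characterization in a direct way, using Markov's inequality for the easy direction and an explicit construction for the other.

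For the sufficient condition, suppose $\varphi: X\to[0,+\infty]$ has relatively compact sublevels $S_c:=\{x\in X:\varphi(x)\leq c\}$ and $M:=\sup_{\mu\in\mathscr K}\int_X\varphi\,d\mu<+\infty$. Given $\varepsilon>0$, I would fix $c$ so large that $M/c<\varepsilon$ and set $K_\varepsilon:=\overline{S_c}$, which is compact since $S_c$ is relatively compact. A Chebyshev-type estimate then gives, for every $\mu\in\mathscr K$,
\[
\mu(X\setminus K_\varepsilon)\leq \mu(\{\varphi>c\})\leq \frac{1}{c}\int_X\varphi\,d\mu\leq \frac{M}{c}<\varepsilon,
\]
which is precisely the tightness condition \eqref{tightness0}.

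For the necessary condition, assume $\mathscr K$ is tight. I would pick, for each $n\in\N^*$, a compact set $K_n\subset X$ with $\sup_{\mu\in\mathscr K}\mu(X\setminus K_n)\leq 2^{-n}$. By replacing $K_n$ with $\bigcup_{k\leq n}K_k$ (a finite union of compact sets is compact) I may assume the sequence is nondecreasing. The natural candidate for $\varphi$ is then
\[
\varphi(x):=\sum_{n=1}^{\infty}\mathbf 1_{X\setminus K_n}(x),
\]
so that $\varphi(x)$ counts the number of $K_n$ that miss $x$ (with $\varphi(x)=+\infty$ if $x\notin\bigcup_n K_n$). Monotone convergence gives
\[
\int_X\varphi\,d\mu=\sum_{n=1}^{\infty}\mu(X\setminus K_n)\leq\sum_{n=1}^{\infty}2^{-n}=1,
\]
uniformly in $\mu\in\mathscr K$. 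Moreover, since the $K_n$ are nested, $\varphi(x)\leq c$ forces $x\in K_{\lfloor c\rfloor+1}$, so the sublevel $\{\varphi\leq c\}$ is a subset of a compact set, hence relatively compact.

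The main (and only mildly subtle) obstacle is verifying that the constructed $\varphi$ genuinely has relatively compact sublevels in $X$, which rests on the monotonicity trick for the $K_n$ and the fact that a subset of a compact set in a Hausdorff space is relatively compact. Everything else reduces to elementary measure-theoretic bookkeeping (Chebyshev's inequality in one direction, monotone convergence plus a geometric series in the other), and no structure on $X$ beyond being a separable metric space is required.
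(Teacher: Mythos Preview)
Your proof is correct. Note, however, that the paper does not actually supply a proof of this lemma: it simply cites \cite[Remark 5.15]{AGS} and moves on, so there is no argument in the paper to compare against. The construction you give --- Chebyshev for the sufficiency, and $\varphi=\sum_n \mathbf 1_{X\setminus K_n}$ with nested compacts $K_n$ for the necessity --- is precisely the standard one (and is essentially what appears in the cited reference). One very minor remark: in the sublevel argument you implicitly use that if the $K_n$ are increasing and $x\notin K_m$ then $x\notin K_j$ for all $j\leq m$, so $\{n:x\notin K_n\}$ is an initial segment $\{1,\dots,\varphi(x)\}$; it may be worth saying this explicitly, but the conclusion $\{\varphi\leq c\}\subset K_{\lfloor c\rfloor+1}$ is correct as stated.
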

In particular, in a metric separable Radon space $X$ every narrowly converging sequence
is tight.
We also use the following tightness criterion from \cite[Lemma 5.2.2]{AGS}.
\begin{lm}
\label{lem2}
Let $X,X_{1},X_{2}$ be separable metric spaces and let $r^{i}:X \to
X_{i}$ be continuous maps such that the product map
$$r:=r^{1}\times r^{2}: X \to X_{1}\times X_{2}$$
is proper. Let $\mathscr K \subset \mathfrak P(X)$ be such that
$\mathscr K_{i}:=r_{\sharp}^{i}(\mathscr K)$ is tight in $\mathfrak P(X_i)$ for $i=1,2.$
Then also $\mathscr K$ is tight in $\mathfrak P(X).$
\end{lm}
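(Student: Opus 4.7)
The plan is to use the standard strategy: build candidate compact sets in $X$ as preimages under $r$ of products of compacts from $X_1$ and $X_2$, and estimate the mass outside via the two pushforwards.

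First, I would fix $\varepsilon>0$ and apply the tightness of $\mathscr K_i$ to obtain compact sets $K_i^{\varepsilon}\subset X_i$ ($i=1,2$) such that
\begin{equation*}
\nu_i(X_i\setminus K_i^{\varepsilon})\leq \frac{\varepsilon}{2},\qquad \forall\,\nu_i\in\mathscr K_i.
\end{equation*}
Since $K_1^{\varepsilon}\times K_2^{\varepsilon}$ is compact in $X_1\times X_2$ and $r:X\to X_1\times X_2$ is proper by hypothesis, the set
\begin{equation*}
K^{\varepsilon}:=r^{-1}\bigl(K_1^{\varepsilon}\times K_2^{\varepsilon}\bigr)
\end{equation*}
is compact in $X$; this is the natural candidate witnessing tightness for $\mathscr K$.

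Next, I would bound $\mu(X\setminus K^{\varepsilon})$ uniformly in $\mu\in\mathscr K$. The key set-theoretic observation is the inclusion
\begin{equation*}
(X_1\times X_2)\setminus (K_1^{\varepsilon}\times K_2^{\varepsilon})\subset \bigl((X_1\setminus K_1^{\varepsilon})\times X_2\bigr)\cup \bigl(X_1\times (X_2\setminus K_2^{\varepsilon})\bigr),
\end{equation*}
so taking preimages under $r=r^1\times r^2$ and using $r^{-1}(A\times X_2)=(r^1)^{-1}(A)$, $r^{-1}(X_1\times B)=(r^2)^{-1}(B)$, one gets
\begin{equation*}
X\setminus K^{\varepsilon}\subset (r^1)^{-1}(X_1\setminus K_1^{\varepsilon})\cup (r^2)^{-1}(X_2\setminus K_2^{\varepsilon}).
\end{equation*}
By the definition of the pushforward $r^i_\sharp\mu\in\mathscr K_i$ and the tightness of $\mathscr K_i$ applied to $r^i_\sharp\mu$,
\begin{equation*}
\mu(X\setminus K^{\varepsilon})\leq r^1_\sharp\mu(X_1\setminus K_1^{\varepsilon})+r^2_\sharp\mu(X_2\setminus K_2^{\varepsilon})\leq \frac{\varepsilon}{2}+\frac{\varepsilon}{2}=\varepsilon,
\end{equation*}
uniformly in $\mu\in\mathscr K$. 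This establishes \eqref{tightness0} for $\mathscr K$ in $\mathfrak P(X)$.

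There is no serious obstacle; the only point deserving care is justifying that $K^{\varepsilon}$ is truly compact in $X$, which relies precisely on the properness assumption on the product map $r$ (continuity of $r^1$ and $r^2$ alone would not suffice, since preimages of compacts under merely continuous maps between metric spaces need not be compact). The rest is a routine measurable decomposition using only $r^i_\sharp\mu\in\mathscr K_i$ and subadditivity.
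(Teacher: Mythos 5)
Your proof is correct and coincides with the paper's treatment of this statement: the paper does not prove Lemma \ref{lem2} itself but cites it from \cite[Lemma 5.2.2]{AGS}, and the proof there is exactly your argument, namely taking $K^{\varepsilon}=r^{-1}(K_1^{\varepsilon}\times K_2^{\varepsilon})$, which is compact by properness of $r$, and bounding $\mu(X\setminus K^{\varepsilon})\leq r^1_\sharp\mu(X_1\setminus K_1^{\varepsilon})+r^2_\sharp\mu(X_2\setminus K_2^{\varepsilon})\leq\varepsilon$ by subadditivity. You also correctly identify properness as the one non-routine ingredient, so nothing further is needed.
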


\section{Dense subsets in $L^p$ spaces}
\label{denset}
Several interesting classes of functions are known to be  dense in $L^p$ spaces for $p\geq 1$. Unfortunately, those type of results are scattered throughout the literature with various degrees
of generality. So, we prefer to recall some useful statements here that hold for metric spaces and finite Radon measures. Since the proofs are simple and elegant, we provide them for reader's convenience.

\begin{prop}
\label{Lusin}
 Let $X$ be a metric space, $\mu$ a finite Radon measure on $X$ and $f : X\to\R$
 a Borel function. Then for every $\varepsilon>0$ there exists a bounded uniformly continuous function $f_\varepsilon: X \to \R$  such that
$$
\mu(\{x\in X :f(x)\neq f_\varepsilon(x)\}) <\varepsilon.
$$
Furthermore, $\sup_{x\in X}|f_\varepsilon(x)|\leq \sup_{x\in X}  |f(x)|$.
\end{prop}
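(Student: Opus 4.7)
The plan is to reduce to a bounded Borel $f$, then invoke a Lusin-type construction of a compact set on which $f$ becomes continuous, and finally extend via a McShane-type infimum followed by truncation.

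\emph{Truncation.} First I would handle $\sup_X|f|=\infty$ by choosing $M_0$ with $\mu(\{|f|>M_0\})<\varepsilon/2$, which is possible since $\mu$ is finite and $\{|f|>n\}\downarrow\emptyset$; set $B:=\{|f|\le M_0\}$. In the already bounded case, take $M_0:=\sup|f|$ and $B:=X$. In either case $|f|\le M_0$ on $B$, and the required bound $\sup|f_\varepsilon|\le\sup|f|$ becomes either $M_0<\infty$ or vacuous.

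\emph{Lusin-type selection of a compact set.} For each $n\in\N^*$, I would partition $[-M_0,M_0]$ into finitely many half-open intervals $J_{n,1},\dots,J_{n,N_n}$ of length $<1/n$ and set $A_{n,i}:=f^{-1}(J_{n,i})\cap B$. Inner regularity of the finite Radon measure $\mu$ yields compacts $K_{n,i}\subset A_{n,i}$ with $\sum_i\mu(A_{n,i}\setminus K_{n,i})<\varepsilon\,2^{-n-1}$; put $K_n:=\bigcup_i K_{n,i}$. Since finitely many pairwise disjoint compacts sit at a common positive distance $\delta_n>0$, any $x,y\in K_n$ with $d(x,y)<\delta_n$ lie in the same piece, so $|f(x)-f(y)|<1/n$. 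Defining $K:=\bigcap_n K_n$ gives a compact subset of $B$ with $\mu(B\setminus K)\le\sum_n\varepsilon\,2^{-n-1}=\varepsilon/2$ on which $f|_K$ is uniformly continuous.

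\emph{Extension.} Let $\omega:[0,\infty)\to[0,2M_0]$ be a concave nondecreasing modulus of continuity of $f|_K$ (the concave envelope of the natural modulus $t\mapsto\sup\{|f(x)-f(y)|:x,y\in K,\,d(x,y)\le t\}$, capped at $2M_0$). I then set
\[
\tilde f(x):=\inf_{y\in K}\bigl[f(y)+\omega(d(x,y))\bigr],\qquad f_\varepsilon(x):=(-M_0)\vee\bigl(M_0\wedge \tilde f(x)\bigr).
\]
Standard McShane bookkeeping (subadditivity from concavity with $\omega(0)=0$) shows $\tilde f$ is uniformly continuous on $X$ with modulus $\omega$ and $\tilde f|_K=f|_K$; truncation to $[-M_0,M_0]$ preserves uniform continuity, enforces $\sup_X|f_\varepsilon|\le M_0\le\sup_X|f|$, and still gives $f_\varepsilon=f$ on $K$. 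Hence $\{f\neq f_\varepsilon\}\subset(X\setminus B)\cup(B\setminus K)$ has $\mu$-measure $<\varepsilon$.

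\emph{Main obstacle.} The pivotal step is the Lusin-type selection: arranging that a countable intersection of shrinking compact approximations retains most of the mass \emph{and} produces an honestly continuous restriction. The leverage comes from the fact that finitely many pairwise disjoint compacts are separated by a positive distance, which is what converts a measurable partition into a uniform oscillation bound first on each $K_n$ and then on $K$. This is precisely where the Radon hypothesis is indispensable; without inner regularity by compacts one would only obtain closed sets, and the separation argument would collapse.
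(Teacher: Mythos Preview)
Your argument is correct, but it follows a genuinely different route from the paper's proof. The paper first decomposes $f=f^+-f^-$ into nonnegative parts, picks a single compact $K$ with $\mu(X\setminus K)<\varepsilon/2$, and then invokes Lusin's theorem as a black box (citing Swartz) to obtain continuous $g_\varepsilon^\pm$ on $K$ agreeing with $f^\pm$ off a small set; the extension to $X$ is done via Mandelkern's explicit Tietze-type formula $x\mapsto\inf_{y\in K}g_\varepsilon^\pm(y)\,d(x,y)/d(x,K)$, which automatically preserves nonnegativity and the bound $\sup_X f_\varepsilon^\pm\le\sup_X f^\pm$, so no final truncation is needed. By contrast, you build the Lusin compact $K$ from scratch via inner regularity on level-set partitions, and then extend with the McShane infimum against a concave modulus, truncating at the end to recover the sup bound. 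Your approach is more self-contained (no external Lusin or Tietze citations) and makes the role of the Radon hypothesis very explicit; the paper's approach is shorter and sidesteps the modulus-of-continuity bookkeeping by working with the sign decomposition and an extension formula tailored to nonnegative functions. One small point worth making explicit in your write-up: the case $K=\emptyset$ (which can only occur when $\mu(X)<\varepsilon$, hence is disposed of by taking $f_\varepsilon\equiv 0$) and the fact that the concave envelope of a bounded modulus with $\omega_0(0^+)=0$ indeed satisfies $\omega(0)=0$.
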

\begin{proof}  Decompose the function $f$ into a
Borel positive and negative part, $f=f^+-f^-$, $f^\pm\geq 0$ with
$f^+=1_{\{x:f(x)\geq 0\}} f$ and $f^-=-1_{\{x:f(x)< 0\}} f$. Since $\mu$ is a Radon finite measure, one can chose a compact set $K\subset X$ such that
$\mu(X\setminus K)<\varepsilon/2$. Since the functions $f^\pm_{| K}: K\to \R$ are  Borel, then by Lusin's theorem (see \cite[Theorem (3.1.1) 2]{swa}) there exist
continuous  functions $g^\pm_\varepsilon :K\to\R$ such that
\begin{equation}
\label{eq.lusin1}
\mu(\{x\in K :f^\pm(x)\neq g^\pm_\varepsilon(x)\}) <\varepsilon/4\,,
\end{equation}
with $\sup_{x\in K}|g^\pm_\varepsilon(x)|\leq \sup_{x\in K}  |f^\pm(x)|$.
Remark that the  bound \eqref{eq.lusin1} is still true if we replace $g^\pm_\varepsilon$  with  $|g^\pm_\varepsilon|$. So, we can take $g^\pm_\varepsilon$ to be nonnegative.
Notice also that  $g^\pm_\varepsilon$ are  bounded uniformly continuous
 functions on $K$. The Tietze extension theorem in \cite{Man90}, says that any real-valued  bounded uniformly continuous function on a subset of $X$ can be extended to a bounded uniformly continuous function on the whole space. In fact, the  following extensions
\begin{equation*}
f^\pm_\varepsilon(x) =\left\{
\begin{array}{lcl}
g^\pm_\varepsilon(x), &\text{ if } & x\in K,\\
\displaystyle\inf_{y\in K} g^\pm_{\varepsilon}(y) \frac{d(x,y)}{d(x,K)}
&\text{ if } & x\notin K\,.
\end{array}
\right.
\end{equation*}
are uniformly continuous on $X$. Moreover, we have
$\sup_{x\in X} f^\pm_\varepsilon(x)\leq \sup_{x\in X}  f^\pm(x)$,
since a simple estimate yields
$$
0\leq \inf_{y\in K} g^\pm_{\varepsilon}(y) \frac{d(x,y)}{d(x,K)}\leq
\sup_{y\in K} g^\pm_{\varepsilon}(y)\leq \sup_{y\in K}  f^\pm(y).
$$
Taking $f_\varepsilon=f^+_\varepsilon-f_\varepsilon^-$, then one checks that
\begin{equation*}
\mu(\{x\in K :f(x)\neq f_\varepsilon(x)\})<\varepsilon/2\,,
\end{equation*}
and moreover $\sup_{x\in X} |f_\varepsilon(x)|\leq \sup_{x\in X}  |f(x)|$.
\end{proof}

Let $  C_{b,u}(X)$ and $\Lip(X)$  denote respectively the spaces of
bounded uniformly continuous functions and bounded Lipschitz continuous functions  on $X$.
\begin{cor}
 Let $X$ be a metric space, $\mu$ a finite Radon measure on $X$. Then the space
 $ C_{b,u}(X)$ is dense in $L^p (X,\mu)$ for $p\geq 1$.
\end{cor}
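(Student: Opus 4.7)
The plan is a two-step approximation: first truncate in height to reduce to bounded Borel functions, then apply the preceding Lusin-type Proposition \ref{Lusin} to replace a bounded Borel function by a bounded uniformly continuous one, paying only a small $L^p$ error controlled by the measure of the discrepancy set.

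\textbf{Step 1 (reduction to bounded functions).} Fix $f\in L^p(X,\mu)$ and $\varepsilon>0$. Define the truncations $f_N(x)=f(x)\,1_{\{|f|\le N\}}(x)$. Since $|f_N|\le |f|\in L^p(X,\mu)$ and $f_N\to f$ pointwise, the dominated convergence theorem applied to $|f_N-f|^p\le 2^p|f|^p$ yields $\|f_N-f\|_{L^p(X,\mu)}\to 0$. Choose $N$ large enough that $\|f_N-f\|_{L^p(X,\mu)}<\varepsilon/2$.

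\textbf{Step 2 (approximation by $C_{b,u}$ functions).} The function $f_N$ is a bounded Borel function with $\sup|f_N|\le N$. Apply Proposition \ref{Lusin} with the parameter $\delta:=(\varepsilon/(4N))^p/\mu(X)^{0}$ rescaled appropriately, i.e.\ pick $\delta>0$ small enough so that $(2N)^p\,\delta\le(\varepsilon/2)^p$; then there exists $g\in C_{b,u}(X)$ with $\sup_X|g|\le N$ and
\[
\mu\big(\{x\in X : f_N(x)\neq g(x)\}\big)<\delta.
\]
On the exceptional set the integrand $|f_N-g|^p$ is bounded by $(2N)^p$, while off of it it vanishes. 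Hence
\[
\|f_N-g\|_{L^p(X,\mu)}^p=\int_{\{f_N\neq g\}}|f_N-g|^p\,d\mu \le (2N)^p\,\delta\le(\varepsilon/2)^p.
\]

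\textbf{Conclusion.} Combining the two estimates by the triangle inequality in $L^p(X,\mu)$ gives $\|f-g\|_{L^p(X,\mu)}<\varepsilon$, which proves density of $C_{b,u}(X)$ in $L^p(X,\mu)$.

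The whole argument is routine once Proposition \ref{Lusin} is in hand; the only point requiring a little care is to perform the truncation \emph{before} invoking Lusin's theorem, so that the uniform bound $\sup|g|\le N$ converts the measure-of-discrepancy estimate into an $L^p$ estimate. There is no real obstacle, but one should note that if one later wants to replace $C_{b,u}(X)$ by $\Lip(X)$ (as used in the proof of Theorem \ref{pr.D5} via Corollary \ref{lip-dens}), an additional density step is needed, using e.g.\ inf-convolution $g_\lambda(x)=\inf_{y\in X}\{g(y)+\lambda\,d(x,y)\}$ to approximate any $g\in C_{b,u}(X)$ uniformly on $X$ by bounded Lipschitz functions.
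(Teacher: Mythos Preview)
Your proof is correct and follows essentially the same route as the paper: reduce to bounded functions (the paper phrases this as ``$L^\infty(X,\mu)$ is dense in $L^p(X,\mu)$''), then apply Proposition~\ref{Lusin} and bound $\|f_N-g\|_{L^p}^p$ by $(2N)^p$ times the measure of the discrepancy set. The awkward expression $\delta:=(\varepsilon/(4N))^p/\mu(X)^{0}$ should simply be removed in favor of your clean restatement ``pick $\delta>0$ small enough so that $(2N)^p\,\delta\le(\varepsilon/2)^p$''.
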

\begin{proof}
The space $L^\infty(X,\mu)$ is dense in $L^p(X,\mu)$. For any $\varepsilon>0$ there exists $f_\varepsilon$ satisfying the conclusion of Proposition
\ref{Lusin}. Hence,
\begin{eqnarray*}
||f-f_\varepsilon||_{L^p}^p&=&\int_{\{x: f(x)\neq f_\varepsilon(x)\}} |f(x)-f_\varepsilon(x)|^p d\mu\\&\leq& 2^p \;\mu(\{x: f(x)\neq f_\varepsilon(x)\})  \;\sup_{x\in X} \mathrm {ess}|f(x)|^p\\
&\leq & 2^p \,\,\varepsilon\,  \,\sup_{x\in X} \mathrm {ess}|f(x)|^p.
\end{eqnarray*}
\end{proof}
\begin{lm}
\label{Lip}
If $\mathcal F$ is a family of $k$-Lipschitz functions on a metric space $X$, then
$$
F(x)=\inf\{ f(x), f\in\mathcal F\}\,,
$$
is $k$-Lipschitz on the set of points where it is finite.
\end{lm}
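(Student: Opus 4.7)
The plan is to verify the $k$-Lipschitz estimate for $F$ directly from the definition of infimum, via a symmetric $\varepsilon$-approximation argument. Fix two points $x,y\in X$ at which $F$ is finite. For every $f\in\mathcal F$, the $k$-Lipschitz property of $f$ gives $f(x)\leq f(y)+k\,d(x,y)$. Passing to the infimum over $f$ on the left side yields
\[
F(x)\leq f(y)+k\,d(x,y)\qquad\text{for all }f\in\mathcal F.
\]

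Next, I would transfer the infimum to the right-hand side. Given $\varepsilon>0$, pick $f_\varepsilon\in\mathcal F$ with $f_\varepsilon(y)\leq F(y)+\varepsilon$; applying the previous bound to $f_\varepsilon$ gives $F(x)\leq F(y)+\varepsilon+k\,d(x,y)$, and letting $\varepsilon\downarrow 0$ yields $F(x)-F(y)\leq k\,d(x,y)$. Swapping the roles of $x$ and $y$ gives the reverse inequality, and together $|F(x)-F(y)|\leq k\,d(x,y)$, which is the claim.

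The one subtlety worth recording, rather than a real obstacle, concerns finiteness. If $F$ is finite at some $x_0$, then for every $y$ and $f\in\mathcal F$ we have $f(y)\leq f(x_0)+k\,d(x_0,y)$, so $F(y)\leq F(x_0)+k\,d(x_0,y)<+\infty$; analogously $f(y)\geq f(x_0)-k\,d(x_0,y)$ and taking the infimum over $f$ on both sides gives $F(y)\geq F(x_0)-k\,d(x_0,y)>-\infty$. Hence the set where $F$ is finite is either empty or all of $X$, so the restriction in the statement is harmless and the two-sided bound above is justified. The argument is entirely elementary; the main (and only) step of substance is the clean exchange of the two infima through the $\varepsilon$-approximation.
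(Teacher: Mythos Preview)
Your argument is correct and is essentially the same $\varepsilon$-approximation as in the paper: pick $f_\varepsilon\in\mathcal F$ with $f_\varepsilon(y)\leq F(y)+\varepsilon$, use $F(x)\leq f_\varepsilon(x)\leq f_\varepsilon(y)+k\,d(x,y)$, and let $\varepsilon\to 0$, then swap $x$ and $y$. Your extra paragraph on finiteness (showing the set where $F$ is finite is either empty or all of $X$) is a harmless addition not present in the paper's proof.
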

\begin{proof}
 Let $x_1,x_2\in X$, then
for any $\varepsilon>0$ there exists $f_\varepsilon\in \mathcal F$ such that
$F(x_2)\geq f_\varepsilon(x_2)-\varepsilon$. Hence,
$$
F(x_1)-F(x_2)\leq f_\varepsilon(x_1)-f_\varepsilon(x_2)+\varepsilon\leq
k \,d(x_1,x_2)+\varepsilon\,.
$$
\end{proof}
\begin{cor}
\label{lip-dens}
 Let $X$ be a metric space, $\mu$ a finite Radon measure on $X$. Then the space
  of bounded Lipschitz functions $\Lip(X)$  is dense in $L^p (X,\mu)$ for $p\geq 1$.
\end{cor}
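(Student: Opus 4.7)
The plan is to use the preceding corollary, which states that $C_{b,u}(X)$ is dense in $L^p(X,\mu)$ for $p\geq 1$, and reduce the problem to approximating a given bounded uniformly continuous function by bounded Lipschitz functions in the uniform norm. Since $\mu(X)<\infty$, uniform convergence immediately implies $L^p$ convergence, so this reduction suffices.

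To approximate $f\in C_{b,u}(X)$, I would use the standard Lipschitz inf-convolution (McShane-type regularization): for each integer $k\geq 1$, define
\begin{equation*}
f_k(x):=\inf_{y\in X}\bigl\{f(y)+k\,d(x,y)\bigr\}\,.
\end{equation*}
For every fixed $y\in X$, the map $x\mapsto f(y)+k\,d(x,y)$ is $k$-Lipschitz, so Lemma \ref{Lip} gives that $f_k$ is $k$-Lipschitz on the set where it is finite. Taking $y=x$ yields $f_k(x)\leq f(x)\leq \sup_X f$, and since $f(y)+k\,d(x,y)\geq \inf_X f$, we have $f_k(x)\geq \inf_X f$. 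Hence $f_k\in \Lip(X)$.

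The main step, and the only nontrivial one, is to establish the uniform convergence $f_k\to f$ on $X$. Given $\varepsilon>0$, uniform continuity of $f$ supplies a $\delta>0$ such that $|f(y)-f(x)|<\varepsilon$ whenever $d(x,y)<\delta$. For $y$ with $d(x,y)<\delta$, one has $f(y)+k\,d(x,y)\geq f(x)-\varepsilon$; for $y$ with $d(x,y)\geq \delta$, one has $f(y)+k\,d(x,y)\geq \inf_X f+k\delta$, which dominates $f(x)$ as soon as $k\delta\geq \sup_X f-\inf_X f$. Thus for $k$ large enough (independently of $x$) we get $f(x)-\varepsilon\leq f_k(x)\leq f(x)$, which yields $\|f_k-f\|_\infty\to 0$.

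Finally, since $\mu$ is finite, the estimate
\begin{equation*}
\|f_k-f\|_{L^p(X,\mu)}^p\leq \mu(X)\,\|f_k-f\|_\infty^p
\end{equation*}
delivers $L^p$-convergence. Combining this with the density of $C_{b,u}(X)$ in $L^p(X,\mu)$ from the preceding corollary, a standard $\varepsilon/2$ argument yields the density of $\Lip(X)$ in $L^p(X,\mu)$. The only delicate point is verifying the uniform convergence, for which both the boundedness and the uniform continuity of $f$ are essential; this is where one uses that the approximating scheme is tailored to the class $C_{b,u}(X)$ produced by the previous corollary rather than to merely bounded Borel functions.
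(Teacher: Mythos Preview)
Your proof is correct and uses the same inf-convolution (McShane regularization) construction $f_k(x)=\inf_{y}\{f(y)+k\,d(x,y)\}$ as the paper. The only execution differences are that the paper first splits $f=f^+-f^-$ into nonnegative parts and then appeals to pointwise monotone convergence together with dominated convergence, whereas you work directly with $f$ and exploit uniform continuity to obtain the stronger uniform convergence $\|f_k-f\|_\infty\to 0$; your route is slightly cleaner since the positive/negative decomposition is unnecessary once $f$ is bounded below.
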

\begin{proof}
 Let $f\in  C_{b,u}(X)$ decomposes into a positive and  negative part, i.e.: $f=f^+-f^-$, $f^\pm\geq 0$ with
$f^+=\frac{|f|+f}{2}$ and $f^-=\frac{|f|-f}{2}$. Let for any $k\in\N$,
 $$
 f_k^\pm(x)=\inf_{y\in X} f^\pm(y)+k d(x,y)\,.
 $$
One easily checks that $f_k^\pm$ are $k$-Lipschitz by Lemma \ref{Lip}, nondecreasing sequences satisfying for any $x\in X$,
\begin{eqnarray*}
0\leq f^\pm_k(x)\leq f^\pm(x) \quad \text{ and }  \quad \lim_{k\to\infty} f^\pm_k(x) =f^\pm(x)\,.
\end{eqnarray*}
So, by dominated convergence and triangle inequality  $f_k:=f_k^+-f_k^-\in \Lip(X)$ converges to $f$ in the $L^p$ norm.
\end{proof}

\section{Equi-integrability and Dunford-Pettis theorem}
Let $(X,\Sigma)$ be a measurable space and $\mu$ a finite measure on $(X,\Sigma)$.
We say that a family  $\mathcal F\subset L^1(X,\mu)$ is \emph{equi-integrable} if for
any $\varepsilon>0$ there exists $\delta>0$ such that:
$$
\forall B\in\Sigma,\, \mu(B)<\delta \Rightarrow
\sup_{f\in \mathcal F}\int_B |f| d\mu<\varepsilon \,.
$$
An interesting characterization of  equi-integrability is given below (see
e.g.~\cite[Proposition 1.27]{AFP}).
\begin{lm}
\label{equi-int}
A bounded set $\mathcal F$ in $L^1(X,\mu)$ is equi-integrable if and only if
\begin{equation*}
\mathcal F\subset \{ f\in L^1(X,\mu): \int_X \theta(|f|) d\mu\leq 1\}\,,
\end{equation*}
for some nondecreasing  convex continuous function $\theta:\R^+\to[0,\infty]$  satisfying
$\theta(t)/t\to\infty$ when $t\to\infty$ or equivalently if and only if
$$
\lim_{M\to\infty} \sup_{f\in \mathcal F}\int_{\{|f|>M\}} |f| d\mu=0\,.
$$
\end{lm}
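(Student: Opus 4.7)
The plan is to prove the three conditions equivalent by showing (equi-integrability + boundedness) $\Longleftrightarrow$ the uniform tail condition $\lim_{M\to\infty}\sup_{f\in\mathcal F}\int_{\{|f|>M\}}|f|\,d\mu=0$, and then that the tail condition is equivalent to the existence of a superlinear convex $\theta$ with $\sup_{f\in\mathcal F}\int_X\theta(|f|)\,d\mu\leq 1$. The first equivalence is elementary measure theory; the real content is the construction of $\theta$ from the tail estimate, which is the classical de la Vall\'ee Poussin argument.

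First I would dispatch the easy equivalence. For the forward direction, boundedness gives $C:=\sup_{f\in\mathcal F}\|f\|_{L^1}<\infty$; Markov's inequality yields $\mu(\{|f|>M\})\leq C/M$, so for $M$ large, $\mu(\{|f|>M\})$ lies below any prescribed $\delta$, and equi-integrability then forces $\int_{\{|f|>M\}}|f|\,d\mu$ below any prescribed $\varepsilon$ uniformly in $f$. Conversely, assuming the tail condition, split
\[
\int_B |f|\,d\mu \;\leq\; M\,\mu(B)+\int_{\{|f|>M\}}|f|\,d\mu,
\]
and fix $M$ so the second term is $<\varepsilon/2$ uniformly, then take $\delta=\varepsilon/(2M)$; boundedness is automatic by taking $B=X$.

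Next I would handle the direction $(2)\Rightarrow(3)$, which is immediate from superlinearity: given $\varepsilon>0$ there exists $M$ with $\theta(t)\geq t/\varepsilon$ for all $t\geq M$, hence
\[
\int_{\{|f|>M\}}|f|\,d\mu \;\leq\; \varepsilon\int_X \theta(|f|)\,d\mu \;\leq\; \varepsilon
\]
uniformly in $f\in\mathcal F$, which also yields the $L^1$-bound by choosing $\varepsilon=1$ and adding $M\cdot\mu(X)$.

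The hard step will be $(3)\Rightarrow(2)$, i.e.\ manufacturing $\theta$. Using the tail condition I would choose a strictly increasing sequence $M_n\uparrow\infty$ such that
\[
\sup_{f\in\mathcal F}\int_{\{|f|>M_n\}}|f|\,d\mu \;\leq\; 2^{-n},\qquad n\in\N,
\]
and then set $\theta(t):=\sum_{n\in\N}(t-M_n)^+$. This $\theta$ is a sum of convex nondecreasing piecewise-linear functions, hence convex, nondecreasing and continuous; its right derivative at $t$ equals $\#\{n:M_n<t\}$, which tends to $+\infty$ as $t\to\infty$, giving the required superlinearity $\theta(t)/t\to\infty$. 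For the uniform bound, Tonelli gives
\[
\int_X \theta(|f|)\,d\mu \;=\; \sum_{n\in\N}\int_X (|f|-M_n)^+\,d\mu \;\leq\; \sum_{n\in\N}\int_{\{|f|>M_n\}}|f|\,d\mu \;\leq\; \sum_{n\in\N}2^{-n} \;=\; 1,
\]
as required. The delicate point here is verifying superlinearity of the series-defined $\theta$ without accidentally making it grow too slowly; the key observation is that because $M_n$ can be chosen to grow arbitrarily fast or slow as long as the $2^{-n}$ bound holds, the slope of $\theta$ past $M_n$ is at least $n$, and this forces $\theta(t)/t\to\infty$.
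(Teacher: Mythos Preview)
Your argument is correct and complete. The paper does not actually prove this lemma: it is stated in the appendix with a reference to \cite[Proposition 1.27]{AFP} and no proof is given. What you have written is the standard de~la~Vall\'ee~Poussin argument, and it matches what one finds in that reference.

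Two minor cosmetic points. First, boundedness of $\mathcal F$ is part of the hypothesis of the lemma, so you need not derive it from the tail condition (though your observation that the tail condition alone forces $L^1$-boundedness is correct and harmless). Second, in the final line $\sum_{n\in\N}2^{-n}=1$ you are implicitly taking $\N=\{1,2,\dots\}$; the paper uses $\N^*$ for positive integers, so if $\N$ includes $0$ the sum would be $2$. This is immaterial since one may rescale $\theta$ or simply start the sequence $(M_n)$ at $n=1$.
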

\begin{thm}[Dunford-Pettis]
\label{dunf}
  A bounded set $\mathcal F$ in $L^1(X,\mu)$  is relatively sequentially compact
     for the weak topology $\sigma (L^{1},L^{\infty })$ if and only if $\mathcal F$ is equi-integrable.
\end{thm}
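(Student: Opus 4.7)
The plan is to prove the two implications separately, since each requires rather different machinery.

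For the direction from weak sequential compactness to equi-integrability, I would argue by contradiction. If $\mathcal F$ fails equi-integrability, one extracts $\varepsilon_0>0$, a sequence $(f_n)\subset\mathcal F$, and sets $B_n\in\Sigma$ with $\mu(B_n)<1/n$ yet $\int_{B_n}|f_n|\,d\mu\geq \varepsilon_0$. By hypothesis some subsequence $f_{n_k}\rightharpoonup f$ weakly in $L^1$; testing against $1_A\in L^\infty$, the signed measures $\nu_k(A):=\int_A f_{n_k}\,d\mu$ converge setwise on $\Sigma$. Vitali--Hahn--Saks then yields equi-absolute continuity of $\{\nu_k\}$ with respect to $\mu$, and a Hahn-decomposition trick applied on $A\cap\{f_{n_k}>0\}$ and $A\cap\{f_{n_k}<0\}$ upgrades this to control of $\sup_k \int_A |f_{n_k}|\,d\mu$ in terms of $\mu(A)$. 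Applied with $A=B_{n_k}$ for $k$ large enough that $\mu(B_{n_k})$ lies below the threshold, this contradicts the lower bound $\varepsilon_0$.

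For the converse, I would embed $L^1(X,\mu)$ into $(L^\infty(X,\mu))^*$ via the canonical map $J(f)[g]:=\int fg\,d\mu$, after reducing to the finite measure case by a standard exhaustion. Banach--Alaoglu gives weak-$*$ relative compactness of the bounded set $J(\mathcal F)$ in $(L^\infty)^*$, so any sequence in $\mathcal F$ admits a weak-$*$ cluster point $\Lambda\in(L^\infty)^*$. The heart of the proof is to show that $\Lambda$ lies in the image $J(L^1)$. For this I verify that the set function $A\mapsto\Lambda(1_A)$ is countably additive and absolutely continuous with respect to $\mu$: if $A_k\downarrow\emptyset$ then $\mu(A_k)\to 0$, and Lemma \ref{equi-int} applied to the equi-integrable family $\mathcal F$ gives $\sup_{f\in\mathcal F}\int_{A_k}|f|\,d\mu\to 0$, which passes to the weak-$*$ limit to yield $|\Lambda(1_{A_k})|\to 0$; similarly $\mu(A)=0$ forces $\Lambda(1_A)=0$. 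The Radon--Nikodym theorem then supplies $f\in L^1$ with $\Lambda=J(f)$, so $\mathcal F$ is relatively weakly compact in $L^1$. Finally, the Eberlein--\v{S}mulian theorem upgrades this to relative weak \emph{sequential} compactness.

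The main obstacle will be the converse direction. Because $L^\infty$ is typically non-separable, Banach--Alaoglu does not directly yield sequential weak-$*$ compactness, which is why the detour through topological weak compactness followed by Eberlein--\v{S}mulian is needed. Moreover, countable additivity of the weak-$*$ cluster point $\Lambda$ is precisely where the equi-integrability hypothesis is indispensable --- mere $L^1$-boundedness would be consistent with $\Lambda$ being a purely finitely additive functional lying outside the range of $J$, and one would not recover an $L^1$ representative.
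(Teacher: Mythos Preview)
The paper does not supply a proof of this theorem: it is stated in the appendix as a classical result, recalled for the reader's convenience, with no argument given. So there is no ``paper's own proof'' to compare against.

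That said, your proposal is essentially a correct and standard proof. A few minor remarks. First, the reduction ``to the finite measure case by a standard exhaustion'' is unnecessary here: the paper explicitly assumes $\mu$ is finite just above the definition of equi-integrability, so you are already in that setting. Second, in the converse direction you phrase the argument in terms of weak-$*$ cluster points of \emph{sequences}, but to conclude that the weak-$*$ closure of $J(\mathcal F)$ in $(L^\infty)^*$ lies inside $J(L^1)$ you need the same conclusion for cluster points of arbitrary \emph{nets}; fortunately your countable-additivity and absolute-continuity verifications go through verbatim for net cluster points, since for any weak-$*$ cluster point $\Lambda$ of $(J(f_\alpha))$ one has $|\Lambda(1_A)|\leq \sup_\alpha \int_A |f_\alpha|\,d\mu$. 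Once that inclusion is established, the weak-$*$ closure of $J(\mathcal F)$, which is weak-$*$ compact by Banach--Alaoglu, sits inside $J(L^1)$, and since the weak-$*$ topology restricted to $J(L^1)$ coincides with the weak topology $\sigma(L^1,L^\infty)$, you obtain relative weak compactness of $\mathcal F$; Eberlein--\v{S}mulian then gives relative weak sequential compactness as you say.
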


\section{Disintegration theorem}
\label{sub.dis}
Let $E,F$ be Radon separable metric spaces. We say that a  measure-valued map $x \in F \to
\mu_{x} \in \mathfrak P(E)$ is Borel if $x \in F \to \mu_{x}(B)$ is a Borel map
for any Borel set $B $ of $E$.  We recall below the disintegration theorem (see
e.g. \cite[Theorem 5.3.1]{AGS}).

\begin{thm}
\label{disint}
Let $E,F$ be Radon separable metric spaces and  $\mu \in
\mathfrak{P}(E)$. Let $\pi: E \rightarrow F$ be
a Borel map and $\nu=\pi_{\sharp}\eta \in
\mathfrak{P}(F).$ Then there exists a $\nu$-a.e.~uniquely
determined Borel family of probability measures $\{
\mu_x\}_{x \in F} \subset \mathfrak{P}(E)$ such that $\mu_x(X\setminus \pi^{-1}(x))=0$
  for $\nu$-a.e. $x\in E$,  and
\begin{equation}
\int_{E }f(x)d\mu(x)=\int_{F}\big(\int_{\pi^{-1}(x)}f(y)d\mu_{x}(y)\;\big)d\nu(x),
\end{equation}
for every Borel map $f:E  \rightarrow [0,+\infty].$
\end{thm}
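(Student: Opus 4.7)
My strategy is to construct the disintegration via a Radon--Nikodym argument on a countable family of continuous test functions, followed by the Riesz--Markov--Kakutani representation theorem. Concretely, since $E$ is a separable metric space I fix a countable $\mathbb{Q}$-vector subspace $\mathcal{D} \subset C_b(E;\R)$ containing the constant $\mathbf{1}$, closed under $\mathbb{Q}$-linear combinations, and such that integration against $\mathcal{D}$ separates points of $\mathfrak{P}(E)$ (for instance, $\mathbb{Q}$-linear combinations of bounded Lipschitz functions $\phi_n \wedge c$, with $c \in \mathbb{Q}^+$ and $(\phi_n)$ a countable point-separating family of bounded Lipschitz functions on $E$). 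For each $f \in \mathcal{D}$, the set function
\[
\lambda_f(B) := \int_{\pi^{-1}(B)} f\,d\mu, \qquad B \in \mathcal{B}(F),
\]
is a finite signed Borel measure with $|\lambda_f(B)| \leq \|f\|_\infty\, \nu(B)$, so $\lambda_f \ll \nu$ and Radon--Nikodym supplies a Borel density $\phi_f : F \to \R$ with $|\phi_f| \leq \|f\|_\infty$ $\nu$-a.e., uniquely defined modulo $\nu$-null sets.

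Because $\mathcal{D}$ is countable, I can pick versions of $\phi_f$ simultaneously and find a single $\nu$-null set $N_0 \subset F$ outside of which the map $L_x : f \mapsto \phi_f(x)$ is $\mathbb{Q}$-linear, positive, and normalized ($L_x(\mathbf{1}) = 1$, which follows from $\lambda_{\mathbf{1}} = \pi_\sharp \mu = \nu$). The bound $|L_x(f)| \leq \|f\|_\infty$ lets $L_x$ extend by sup-norm continuity to a positive linear functional of norm one on $C_b(E)$, and the Riesz--Markov--Kakutani theorem (applied on the Radon space $E$) then produces a unique Borel probability $\mu_x \in \mathfrak{P}(E)$ with $\int f\,d\mu_x = \phi_f(x)$ for all $f \in \mathcal{D}$ and $x \notin N_0$. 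Borel measurability of $x \mapsto \mu_x(A)$ for every Borel $A \subset E$ follows by a monotone class argument starting from the Borel measurability of each $\phi_f$, and the integral identity of the theorem propagates from $\mathcal{D}$ to bounded Borel $f$ by the same monotone class reasoning, then to nonnegative Borel $f$ by monotone convergence.

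For the concentration property I use the integral identity with $f = \mathbf{1}_{\pi^{-1}(U_k)}$, where $(U_k)$ is a countable base of open sets of the metric space $F$: for every $B \in \mathcal{B}(F)$,
\[
\int_B \mu_x(\pi^{-1}(U_k))\,d\nu(x) = \mu(\pi^{-1}(U_k \cap B)) = \nu(U_k \cap B) = \int_B \mathbf{1}_{U_k}(x)\,d\nu(x),
\]
so $\mu_x(\pi^{-1}(U_k)) = \mathbf{1}_{U_k}(x)$ off a $\nu$-null set depending on $k$. Taking the union over $k$ and using that $(U_k)$ separates points (so $\{x\} = \bigcap_{k : x \in U_k} U_k$ for every $x \in F$), a monotone intersection gives $\mu_x(E \setminus \pi^{-1}(x)) = 0$ for $\nu$-a.e.\ $x \in F$. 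Uniqueness is immediate: any other disintegration $\{\mu'_x\}$ satisfies $\int f\,d\mu'_x = \phi_f(x)$ $\nu$-a.e.\ for every $f \in \mathcal{D}$, and $\mathcal{D}$ separates probability measures, so $\mu_x = \mu'_x$ off a $\nu$-null set.

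The most delicate step will be ensuring that the Riesz representation of $L_x$ produces a genuinely countably additive Radon probability on $\mathcal{B}(E)$ rather than merely a finitely additive bounded charge. This is exactly where the Radon hypothesis on $E$ is essential: tightness of $\mu$ (for every $\varepsilon>0$ there exists a compact $K \subset E$ with $\mu(E \setminus K) < \varepsilon$) transfers via the integral identity to $\int_F \mu_x(E \setminus K)\,d\nu(x) < \varepsilon$, and a diagonal argument along a sequence $\varepsilon_n \to 0$ yields a $\nu$-conull set on which each $\mu_x$ is inner regular by compact sets, hence a Radon probability, as required. Merging the Radon-regularity null set with $N_0$ and the concentration null set completes the construction.
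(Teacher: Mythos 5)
The paper itself offers no proof of this statement: Appendix E simply recalls it from \cite[Theorem 5.3.1]{AGS}, so the relevant comparison is with the standard argument behind that citation, which your construction essentially reproduces. Your skeleton is the classical one and most of it is sound: Radon--Nikodym densities $\phi_f$ over a countable measure-determining class $\mathcal{D}$ (which should also be taken closed under multiplication, so that the functional monotone class theorem applies and $\sigma(\mathcal{D})=\mathscr{B}(E)$), measurability and the integral identity by monotone class, concentration via a countable base of $F$ (the localized identity $\int_B\phi_f\,d\nu=\lambda_f(B)$ that this step needs does come for free from the Radon--Nikodym construction), and uniqueness from separability.

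The genuine gap is in the representation step, in two respects. First, the Riesz--Markov--Kakutani theorem does not apply as invoked: $E$ is a general separable metric Radon space, not (locally) compact, and a norm-one positive linear functional on $C_b(E)$ corresponds in general only to a finitely additive charge; moreover $C_b(E)$ is non-separable when $E$ is noncompact, so ``extension by sup-norm continuity'' from the countable family $\mathcal{D}$ only reaches the closure $\overline{\mathcal{D}}$, not all of $C_b(E)$. Second, your repair of countable additivity is circular as written: the estimate $\int_F\mu_x(E\setminus K)\,d\nu(x)<\varepsilon$ uses the integral identity for the Borel indicator $1_{E\setminus K}$, which you obtained by monotone class \emph{from the measures} $\mu_x$ --- i.e., it presupposes exactly the countably additive objects whose existence is at stake. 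The correct fix keeps your idea but reverses the order, working at the level of the functionals $L_x$ before any representation: fix compacts $K_n$ with $\mu(E\setminus K_n)\le 4^{-n}$, include the functions $f^{(n)}_m=\min(1,m\,d(\cdot,K_n))$ in $\mathcal{D}$, observe that $\phi_{f^{(n)}_m}$ is $\nu$-a.e.\ nondecreasing in $m$ with $\int_F\sup_m\phi_{f^{(n)}_m}\,d\nu\le\mu(E\setminus K_n)$ by monotone convergence of the densities, and apply Borel--Cantelli to obtain a $\nu$-conull set on which every $L_x$ is tight (any continuous $0\le g\le 1$ vanishing on $K_n$ satisfies $g\le f^{(n)}_m+\delta_m$ with $\delta_m\to 0$ by compactness of $K_n$). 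On that set one may invoke the Daniell--Stone theorem, tightness supplying the continuity condition, or --- more classically --- embed $E$ homeomorphically into the Hilbert cube $H$, where $C(H)$ is separable and the genuine Riesz--Markov--Kakutani theorem applies, and use the same tightness estimate to show the resulting measures on $H$ are carried by the $\sigma$-compact set $\bigcup_n K_n\subset E$. With this rearrangement your proof closes and coincides with the textbook argument behind the paper's citation.
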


\bibliographystyle{unsrt}

\end{document}